\documentclass[11pt]{article}
\usepackage[margin=1in]{geometry}
\usepackage{amsmath,amssymb,amsthm,mathtools}
\usepackage{comment}
\usepackage{microtype}
\usepackage[colorlinks=true,linkcolor=blue,citecolor=blue,urlcolor=blue]{hyperref}
\newtheorem{theorem}{Theorem}[section]
\newtheorem{definition}[theorem]{Definition}
\newtheorem{lemma}[theorem]{Lemma}
\newtheorem{proposition}[theorem]{Proposition}
\newtheorem{claim}[theorem]{Claim}
\newtheorem{remark}[theorem]{Remark}

\newcommand{\Surf}{\Sigma}
\newcommand{\R}{\mathbb R}
\newcommand{\Diff}{\operatorname{Diff}}
\newcommand{\Imm}{\operatorname{Imm}}
\newcommand{\pp}{\partial}

\title{Immortal Curve Shortening Flow on Analytic Surfaces}
\author{Aprameya Girish Hebbar\thanks{Department of Mathematics, Rutgers University, Piscataway, NJ 08854. Emails: \texttt{ah1531@math.rutgers.edu}, \texttt{gs977@math.rutgers.edu}, \texttt{qs176@math.rutgers.edu}.}
\and Guanhua Shao\footnotemark[1] \and Qi Sun\footnotemark[1]}

\date{}

\begin{document}

\maketitle

\begin{abstract}
We show that on any closed orientable real-analytic Riemannian surface, any smoothly immersed closed immortal curve-shortening flow converges to a unique non-constant closed geodesic. 
\end{abstract}

\section{Introduction}
Let $(\Surf,g)$ be a closed (compact without boundary) orientable real-analytic Riemannian manifold of dimension two and let $\Gamma_0:S^1\rightarrow\Sigma$ be a smoothly immersed curve. We denote by $\Gamma\colon S^1\times[0,T)\to \Surf$, 
\begin{equation}
\label{eqn:CSF}
(\partial_t\Gamma)^\perp=\boldsymbol{k},\quad T\in(0,+\infty]
\end{equation}
the maximal smoothly immersed curve-shortening flow (CSF) starting with the initial curve $\Gamma(\cdot,0)=\Gamma_0$, where $\boldsymbol{k}$ is the (geodesic) curvature vector. In this paper, we focus on the case $T=+\infty$ and we call such a solution \emph{immortal}.

\subsection{Setup}
Following \cite{Angenent2005}, we denote by
\[
  \Imm(S^1,\Sigma)
  :=
  \left\{
    \gamma\in C^\infty(S^1,\Sigma)
    \;\middle|\;
    \partial_\theta\gamma(\theta)\neq0
    \text{ for every }\theta\in S^1
  \right\}
\] the space of parameterized smoothly immersed curves, by
\[
\Diff^+(S^1)
:=
\left\{
\varphi\colon S^1\to S^1
\;\middle|\;
\varphi \text{ is a smooth orientation-preserving diffeomorphism}
\right\}
\]
the diffeomorphism group and we consider the space of unparametrized smoothly immersed curves with orientation,
\[
  \Omega
  :=
  \Imm(S^1,\Sigma)/\Diff^+(S^1).
\]
We write elements of $\Omega$ as $[\gamma]$ for $\gamma\in \operatorname{Imm}(S^1,\Sigma)$ and equip $\Omega$ with the quotient topology induced by the $C^\infty$ topology on $\Imm(S^1,\Sigma)$ as in \cite{CerveraMascaroMichor1991}; see also \cite[Section~5]{AryanLee2026}. It follows that $[\gamma_i]\to [\gamma]$ in $\Omega$ if and only if there exists $\varphi_i\in \operatorname{Diff}^+(S^1)$ such that $\gamma_i\circ \varphi_i\to \gamma$ in $C^\infty(S^1,\Surf)$. 

\subsection{Main result}

\begin{theorem}
\label{thm:main}
For any immortal curve-shortening flow $\Gamma$ on $(\Surf,g)$, there exists a unique non-constant closed geodesic $[\gamma_*]\in\Omega$ such that $[\Gamma(\cdot,t)]\to [\gamma_*]$ in $\Omega$ as $t\to \infty$. 

Moreover, if the initial curve $\Gamma_0$ is embedded, then $\gamma_*$ is an embedded closed geodesic of multiplicity one. 
\end{theorem}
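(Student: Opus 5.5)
The plan is to combine three ingredients: uniform curvature bounds for immortal flows, subsequential convergence to geodesics, and a Łojasiewicz–Simon gradient inequality for the length functional near a closed geodesic. The last step turns subsequential convergence into full convergence, and it is where real-analyticity of $(\Surf,g)$ is used.

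First, the a priori estimates. Length is nonincreasing, with $\frac{d}{dt}L(t)=-\int k^2\,ds$, so $\int_0^\infty\!\int k^2\,ds\,dt\le L(0)<\infty$. Since $T=\infty$, the flow cannot form a singularity. I would use Grayson's analysis of CSF on surfaces, together with the immersed blow-up theory (Angenent; Oaks), to argue that an immortal flow has uniformly bounded curvature for $t\ge1$. If $\sup k$ were unbounded along $t_i\to\infty$, rescaling at the curvature maxima would produce an ancient solution in $\R^2$. Using the time-integrated $\int k^2$ bound and the uniform bound on length (hence on total curvature pieces), I would rule out grim reapers and shrinking loops on late time intervals, because such profiles would force a definite drop in length infinitely often. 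I expect citing or adapting this bound to be routine. Parabolic bootstrapping then gives bounds on all derivatives $\partial_s^m k$ for $t\ge1$.

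Second, I would obtain a limit geodesic. By the dissipation bound there are times $t_i\to\infty$ with $\int k^2(\cdot,t_i)\,ds\to0$. Arclength parametrizations are $C^\infty$-precompact, and the length is bounded below, since a curve of small length with small $\int k^2$ would be contractible and would then shrink in finite time. It follows that a subsequence converges in $\Omega$ to a non-constant closed geodesic $[\gamma_*]$.

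Third, the main step, which I expect to be the main obstacle, is to upgrade this to convergence of the whole flow. Write nearby curves as normal graphs $u$ over $\gamma_*$. The length functional $L(u)$ is analytic on $C^{2,\alpha}$, because $g$ is analytic, and its linearization is the Jacobi operator, which is Fredholm. Simon's theorem then gives a Łojasiewicz inequality
\[
|L(u)-L(0)|^{1-\theta}\le C\|k_u\|_{L^2}, \qquad \|u\|_{C^{2,\alpha}}<\sigma.
\]
The delicate point is that CSF is only geometric. I would reparametrize the flow as a normal graph over $\gamma_*$, so that the normal velocity equals $k$ up to a factor comparable to $1$. Then
\[
-\frac{d}{dt}\bigl(L-L_*\bigr)^{\theta}\ge c\,\|\partial_t u\|_{L^2}.
\]
Integrating this shows that once the flow enters a small neighborhood it has finite $L^2$ path length, so by interpolation with the derivative bounds it remains in the neighborhood and converges. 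Here $L_*=\lim L(t)$, which is monotone. This yields uniqueness of the limit and convergence in $\Omega$.

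Finally, the embedded case. CSF preserves embeddedness, so every $\Gamma(\cdot,t)$ is embedded and the limit is a smooth limit of embedded curves. Suppose the limit has multiplicity $m\ge2$, or has a tangential self-contact. By the strong maximum principle applied to two branches of the same embedded flow, such a configuration cannot be reached in the limit unless the curve is a multiply covered geodesic. I would rule out the multiply covered case using the fact that $\Gamma(\cdot,t)\to\gamma_*$ in $\Omega$ as parametrized immersions of degree one: the convergence $[\Gamma(\cdot,t)]\to[\gamma_*]$ means $\gamma_*$ is the limit of simple loops under $C^1$ convergence of parametrizations. A $C^1$-limit of embedded parametrized loops that is an immersion with a transverse double point is impossible, and a tangential double point is excluded by the maximum principle (Sturmian intersection count). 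Hence $\gamma_*$ is embedded with multiplicity one.
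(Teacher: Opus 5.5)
Your main step is the paper's argument. You take normal graphs over a fixed subsequential limit $\gamma_*$, use analyticity of length with the Jacobi operator as Fredholm linearization, apply the \L ojasiewicz--Simon inequality, derive $-\frac{d}{dt}(L-L_*)^{\beta}\ge c\|\partial_t u\|_{L^2}$, and interpolate against uniform higher-order bounds to trap the flow. The paper works in $H^2$ via Feehan--Maridakis rather than $C^{2,\alpha}$ via Simon, which is immaterial. It also makes explicit two things you gloss over: how curvature-derivative bounds become uniform $H^m$ bounds on the graph functions (Lemma~\ref{lem:uniform} and the appendix lemma), and how uniqueness of CSF lets the graphical representation continue on a maximal interval.

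The problems are in two auxiliary justifications. First, your blow-up sketch for the curvature bound does not work as stated. After rescaling by $\lambda_i=\sup|k|(t_i)\to\infty$, a definite amount of $\iint k^2$ in the rescaled flow corresponds to a length drop of only order $\lambda_i^{-1}$ in the original flow. So finiteness of $\int_0^\infty\!\int k^2$ does not exclude grim reapers or loops appearing infinitely often. This input, together with $L_\infty>0$ and subsequential convergence, should simply be cited (\cite{Grayson1989}, \cite[Lemma~3.1]{Angenent2005}), as the paper does in Lemma~\ref{lem:global}.

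Second, in the embedded case the strong maximum principle and Sturmian intersection counting act only at finite times. They cannot prevent two branches from approaching each other tangentially as $t\to\infty$, so they do not exclude a tangential self-contact of the limit. The correct reason is geodesic uniqueness. Suppose $\gamma_*(\theta_1)=\gamma_*(\theta_2)$ with $\gamma_*'(\theta_1)=\pm\gamma_*'(\theta_2)$.
\begin{itemize}
\item With the plus sign, $\gamma_*$ is a multiple cover.
\item With the minus sign, $\gamma_*(\theta_1+s)=\gamma_*(\theta_2-s)$ for all $s$, which forces $\gamma_*'=0$ at a midpoint, a contradiction.
\end{itemize}
Transverse double points are excluded by $C^1$-stability, as you say.

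The multiple-cover case $\gamma_*=\sigma(m\,\cdot)$ with $m\ge2$ and $\sigma$ primitive still needs an actual argument. By the two observations above, $\sigma$ is embedded. Your appeal to ``degree one'' parametrizations does not by itself rule this case out. Write $\Gamma_t$ as a normal graph $u_t$ over $\gamma_*$; the normals at $\theta$ and $\theta+2\pi/m$ agree. Embeddedness then forces $u_t(\theta+2\pi/m)-u_t(\theta)$ to have a fixed sign, and iterating $m$ times contradicts periodicity. With these repairs your direct argument works. The paper instead obtains embeddedness and multiplicity one of every subsequential limit from Grayson's results, via Lemma~\ref{lem:global}.
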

Equivalently, there exists some $\varphi_t\in \operatorname{Diff}^+(S^1)$ such that
\[
  \Gamma(\varphi_t(\cdot),t)\to \gamma_*
  \qquad\text{in }C^\infty(S^1,\Surf),
\]
as $t\to\infty$.  In particular, if $t_i\to \infty$, $\psi_i\in \operatorname{Diff}^+(S^1)$, and $\Gamma(\psi_{i}(\cdot), t_i) \to \bar \gamma$ for some immersion $\bar \gamma$, then there exists  $h \in \Diff^+(S^1)$ such that $\bar \gamma = \gamma_* \circ h$. 

\subsection{Background}
For finite-time singularities of planar CSF, the well-known Gage-Hamilton-Grayson theorem \cite{GageHamilton1986,Grayson1987} says that any embedded closed curve shrinks to a round point. For singularities of embedded CSF on surfaces, see \cite{Zhu1998,Angenent1990,Angenent1991,Oaks1994,JohnsonMuraleetharan2010,Edelen2015}; see also \cite{sun2024curve,sun2025huisken,sun2025singularities} for recent extensions to CSF with convex projections in higher codimensions.

For embedded immortal CSF on surfaces, based on classical results by Gage \cite{Gage1990} and Grayson \cite{Grayson1989}, the geodesic curvatures tend uniformly to zero, and the evolving curves converge smoothly subsequentially to closed geodesics. Those results extend to the immersed setting; see  \cite{Grayson1989} and \cite[Lemma~3.1]{Angenent2005}. 

Uniqueness of geodesic limits is known in some special cases; see \cite{Gage1990,Angenent1999,bryan2026sharp}. However, uniqueness can fail when the ambient metric is merely smooth: see recent work by Aryan and Lee \cite{AryanLee2026}. 

Grayson suggested that real analyticity of the ambient metric should restore uniqueness \cite[p.~74]{Grayson1989}; Theorem~\ref{thm:main} confirms this expectation for closed orientable surfaces and extends the conclusion to immersed curves. 

\subsection{Outline of the proof}
Our argument mainly relies on the \L ojasiewicz-Simon gradient inequality,
originating in Simon's work on evolution equations
\cite{Simon1983} and used here through the abstract Banach-space
version of Feehan and Maridakis \cite{FeehanMaridakis2020}.  
Closely related arguments appear for elastic flows of immersed closed curves in analytic
manifolds \cite{Pozzetta2022} and for motion by curvature of planar
networks near a minimal network \cite{PludaPozzetta2024}; see also related works on uniqueness of tangent flows \cite{schulze2014uniqueness,colding2015uniqueness,chodosh2021uniqueness,zhu2020ojasiewicz,lee2024uniqueness}. 

In Section~\ref{sec:prelims}, we introduce normal graph coordinates near one fixed geodesic limit. In Section~\ref{sec:analytic}, we show that the length functional is
real analytic and recall the \L ojasiewicz-Simon gradient inequality. 
In Section~\ref{sec:trapping}, we use this inequality to show that the curve becomes trapped in any prescribed neighborhood of the limit for all sufficiently large times, completing the proof of Theorem~\ref{thm:main}. 

\noindent \textbf{Acknowledgments. }A.G.H. thanks Nata\v{s}a \v{S}e\v{s}um for her continued support. G.S. thanks Daniel Ketover for his continued support and encouragement. Q.S. thanks Sigurd Angenent for some conversations on CSF on manifolds. 

\noindent \textbf{Statement on the use of AI. }
OpenAI’s GPT-5.6 Sol was used in preparing the first draft, but the final mathematical arguments were developed, written, and verified by the authors. The authors take full responsibility for the correctness and content of the paper.

\section{Preliminaries}
\label{sec:prelims}
We regard $S^1$ as the standard unit circle $\R/2\pi\mathbb Z$, with angular coordinate $\theta\in[0,2\pi)$ and measure $d\theta$.  
For an integer $m\geq0$, we denote by
\[
 H^m(S^1)
 :=
 \left\{
 u\in L^2(S^1): \text{weak derivatives }
 \partial_\theta^j u\in L^2(S^1)
 \text{ for }0\leq j\leq m
 \right\},
\]
the Sobolev space with norm and inner product 
\[
 \|u\|_{H^m(S^1)}^2
 :=
 \sum_{j=0}^m\int_{S^1}|\partial_\theta^j u|^2\,d\theta\text{ and }\langle u,v\rangle_{H^m}
 :=
 \sum_{j=0}^m\int_{S^1}
 \partial_\theta^j u\,\partial_\theta^j v\,d\theta.
\]
We use the standard one-dimensional Sobolev embeddings. In particular, $H^2(S^1)\hookrightarrow C^1(S^1)$ continuously and $H^2(S^1)\Subset C^1(S^1)$ compactly. 

The next lemma follows from previous work of Gage \cite{Gage1990}, Grayson \cite{Grayson1989} and Angenent \cite{Angenent2005}.
\begin{lemma}
\label{lem:global}
Let \(\Gamma\colon S^1\times[0,\infty)\to \Surf\) be an immortal smooth CSF through immersions. Write $\Gamma_t:=\Gamma(\cdot,t)$ for $t\in [0,\infty)$. Then the length $L(t)=L(\Gamma_t)$ decreases to a positive limit $L_\infty$, and
\begin{equation}
\label{eqn:curv-to-zero}
\sup_{\Gamma_t}|k|\to  0 \qquad \text{as }t\to  \infty.
\end{equation}
Moreover, every sequence $t_i\to\infty$ has a subsequence for which $[\Gamma_{t_i}]$ converges in $\Omega$ to a nonconstant immersed closed geodesic in $\Omega$.
If $\Gamma_0$ is embedded, then every such limit is embedded and has multiplicity one. 
\end{lemma}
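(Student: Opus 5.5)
The plan is to assemble the lemma from the classical estimates, ordering the steps so that each feeds the next. First, along the flow the length satisfies
\[
\frac{d}{dt}L(t)=-\int_{\Gamma_t}k^2\,ds\le 0,
\]
so $L$ is nonincreasing and converges to some $L_\infty\ge0$. Integrating this identity gives
\[
\int_0^\infty\!\!\int_{\Gamma_t}k^2\,ds\,dt\le L(0)<\infty.
\]

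Second, we show the curvature decays uniformly. For this we invoke Grayson \cite{Grayson1989} in the embedded case and \cite[Lemma~3.1]{Angenent2005} in the immersed case. The mechanism is as follows.
\begin{itemize}
\item Immortality rules out curvature blow-up. If $\sup|k|$ were unbounded along a sequence of times, a rescaling would produce a singularity model. Since the flow exists for all time, one shows instead that $\sup_{\Gamma_t}|k|$ stays bounded on $[1,\infty)$.
\item Interior parabolic (Shi-type) estimates on the compact surface then bound every derivative $\partial_s^m k$ uniformly for $t\ge1$.
\item Combining these with the space-time $L^2$ bound: $\int_{\Gamma_t}k^2\,ds$ has bounded time derivative and is integrable in $t$, so it tends to $0$.
\item Interpolation against the bound on $\partial_s k$, using $L(t)\le L(0)$, converts this into $\sup_{\Gamma_t}|k|\to0$.
\end{itemize}

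Third, we prove $L_\infty>0$, which I expect to be the main obstacle. Suppose instead that $L(t)\to0$. Then for large $t$ the curve lies in a small geodesically convex ball $B$ of radius comparable to $L(t)$. Gauss--Bonnet or the turning-number formula for the immersed closed curve in $B$ gives
\[
\int_{\Gamma_t}|k|\,ds\ge 2\pi-\int_{\text{enclosed}}|K|\,dA,
\]
and more generally $\int_{\Gamma_t}|k|\,ds$ is bounded below by a positive constant (up to the area error), since the tangent of a closed curve in a nearly Euclidean ball must rotate. Meanwhile, the curvature decay gives $\int_{\Gamma_t}|k|\,ds\le L(t)\sup_{\Gamma_t}|k|\to0$, a contradiction. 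In the immersed case with turning number $0$ this argument needs more care. Here I would instead apply Angenent's result that immersed curves of small length develop a singularity in finite time, which contradicts $T=\infty$.

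Fourth, we extract limits. For $t_i\to\infty$, reparametrize $\Gamma_{t_i}$ proportionally to arc length. The curvature bounds together with all the higher derivative bounds make these maps uniformly $C^m$-bounded for every $m$, with speed $L(t_i)/2\pi\to L_\infty/2\pi>0$. By Arzel\`a--Ascoli and compactness of $\Surf$, a subsequence converges in $C^\infty$ to a constant-speed immersion $\gamma_*$. Since $k\to0$, the limit is a geodesic, and it is nonconstant because its length is $L_\infty>0$. Convergence in $\Omega$ follows from the characterization of the topology by reparametrizations.

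Fifth, if $\Gamma_0$ is embedded, the flow stays embedded for all time. This is the maximum principle applied to the distance function between distinct points. Any limit $\gamma_*$ is then a smooth limit of embedded curves, so it can have only tangential self-contacts. Because $\gamma_*$ is a geodesic, uniqueness of geodesics with given initial data forces any such contact to come from a multiply covered closed geodesic. Finally, Grayson's argument excludes multiplicity: a multiple cover would require nearby sheets of the embedded $\Gamma_t$ to converge onto one geodesic. A local Sturmian or intersection-number argument between two nearly parallel sheets, together with the fixed orientation, shows this cannot happen. This gives an embedded limit of multiplicity one.
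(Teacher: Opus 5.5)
The paper gives no argument for this lemma beyond citing Gage, Grayson and Angenent. So insofar as you simply invoke Grayson and Angenent's Lemma~3.1 in Step~2, you are on the paper's route. As a proof sketch, however, your reconstruction has a real hole, and it is in the first bullet of Step~2, not in $L_\infty>0$. Immortality only rules out blow-up at finite times. If $\sup_{\Gamma_{t_i}}|k|\to\infty$ along $t_i\to\infty$, rescaling yields an ancient planar solution, which contradicts nothing. So ``one shows instead that $\sup|k|$ stays bounded'' is precisely the unproved content.

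What actually controls the curvature is your Step~1 bound. Since $\int_{t-1}^{t-1/2}\int_{\Gamma_\tau}k^2\,ds\,d\tau\le L(t-1)-L(t-\tfrac12)\to0$, some $t_0\in[t-1,t-\tfrac12]$ has $\int_{\Gamma_{t_0}}k^2\,ds$ small. An $L^2$-curvature smoothing estimate on $[t_0,t_0+1]$ then gives boundedness and decay of $\sup_{\Gamma_t}|k|$ simultaneously. Sources for that estimate are Angenent's $L^p$-curvature theory, or energy estimates for $\int k^2\,ds$ and $\int k_s^2\,ds$ combined with $\|k\|_\infty^2\le 2L^{-1}\|k\|_{L^2}^2+\tfrac L2\|k_s\|_{L^2}^2$.

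Your final interpolation, like the energy version of this estimate, needs a \emph{lower} bound on $L$, not $L\le L(0)$. With your ordering, Step~2 presupposes Step~3 while Step~3 uses Step~2's output. The fix is to prove $L_\infty>0$ first, which is elementary. By Fenchel's theorem, every closed plane curve of any rotation index (figure-eights included) has $\int|k|\,ds\ge2\pi$. Transplanted to normal coordinates, this gives $\int_{\Gamma_t}|k|\,ds\ge\pi$ whenever $L(t)\le L_0$. Hence $L'=-\int k^2\,ds\le-\pi^2/L$, and $L^2$ would hit $0$ in finite time. Turning number zero needs no special treatment, and no appeal to curvature decay or to a finite-time singularity theorem is needed.

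In Step~5, the reduction to $\gamma_*=\sigma^m$ with $\sigma$ an embedded primitive geodesic is fine. Excluding $m\ge2$, however, cannot be done by a local Sturmian comparison of nearly parallel sheets. Disjoint sheets converging to the same geodesic arc are locally harmless, and on non-orientable surfaces the conclusion is actually false. For example, on a non-orientable hyperbolic surface, the boundary of a thin M\"obius-band collar of a one-sided geodesic remains equidistant under the flow and converges to the double cover.

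The argument is global, and it is exactly where orientability of $\Sigma$ enters. Since $\sigma$ is two-sided, it has an annular neighborhood. For large $i$, $\Gamma_{t_i}$ lies in this annulus and is homotopic there to $\sigma^m$, so it represents $m$ in $\pi_1\cong\mathbb Z$. But an embedded closed curve in an annulus represents $0$ or $\pm1$, hence $m=1$.
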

\begin{remark}
The limit obtained may depend on the chosen sequence.
For general immersed CSF $\Gamma_t$, limits obtained in the above lemma can fail to be embedded or have multiplicity greater than $1$.
\end{remark}
Let \(\Gamma\colon S^1\times[0,\infty)\to \Surf\) be a smooth curve-shortening flow through immersions. From now on, we fix a geodesic $\gamma_*\colon S^1\to\Surf$ obtained as a subsequential
limit using Lemma~\ref{lem:global} and parametrize it with constant speed so that 
\[
 |\partial_\theta\gamma_*|_g=\frac{\ell}{2\pi},
 \qquad
 \ell=L(\gamma_*)=L_\infty.
\]
Let \(\mathbf T_*:=\frac{2\pi}{\ell}\,\partial_\theta\gamma_*\) be its oriented unit tangent. We denote by $ds$ the arclength measure along $\gamma_*$, pulled back to $S^1$, and by $\partial_s$ the corresponding arclength derivative. Thus
\begin{equation}
\label{eq:theta-to-s}
 ds=\frac{\ell}{2\pi}\,d\theta,
 \qquad
 \partial_s=\frac{2\pi}{\ell}\,\partial_\theta.
\end{equation}
Whenever a function below is written in the variable $s$, we mean its expression in the arclength coordinate $s=(\ell/2\pi)\theta\pmod\ell$. The Sobolev spaces defined above are
unchanged if one uses $d\theta$ and $ds$ as the resulting norms are equivalent, with constants depending only on $\ell$.

Fix an orientation of $\Surf$ and let $J:T\Sigma \to T\Sigma$ be the complex structure on $\Sigma$. Set $\nu_*:=J\mathbf T_*$, which is a unit normal along $\gamma_*$.  For $\varepsilon>0$ small, define
\[
 F\colon S^1\times(-\varepsilon,\varepsilon)\to\Surf,
 \qquad
 F(\theta,r):=\exp_{\gamma_*(\theta)}\!\bigl(r\nu_*(\theta)\bigr), 
\]
so that $F$ is a local diffeomorphism. Since $\gamma_*$ may be only immersed, $F$ need not be injective. Let $\mathcal{N}_\varepsilon$ denote the image of $F$. For $u\in H^2(S^1)$ satisfying $\|u\|_{C^0}<\varepsilon,$ we denote by
\[
    \gamma_u\colon S^1\to \mathcal N_\varepsilon,
    \qquad
    \gamma_u(\theta)=F(\theta,u(\theta)), \theta \in S^1,
\]
the corresponding normal graph over $\gamma_*$. Since $H^2(S^1)\hookrightarrow C^1(S^1)$, the map $\gamma_u$ is a $C^1$ immersion into $\mathcal{N}_\varepsilon$. Using Sobolev embedding, choose $\delta>0$ sufficiently small (depending on $\varepsilon$) that
\begin{equation}
\label{eq: C^1 bound of u}
\|u\|_{H^2}\leq\delta
\quad\Rightarrow \quad
\|u\|_{C^0}<\frac{\varepsilon}{2}
\quad\text{and}\quad
\|\pp_s u\|_{C^0}<\frac{\varepsilon}{2},
\end{equation}
Fix the closed ball
\begin{equation}
\label{eq:U-delta}
    \mathcal U_{\delta}
    :=
    \left\{
        u\in H^2(S^1):
        \|u\|_{H^2}\leq \delta
    \right\}.   
\end{equation}
When using the arclength coordinate $s=(\ell/2\pi)\theta$, we use the same
symbols $F,\gamma_*,\nu_*$, and $u$ for their reparametrizations. Using Gauss' lemma, the pullback metric on $S^1\times (-\varepsilon/2,\varepsilon/2)$ can be written as 
\begin{equation}
\label{eq:Fermi-metric}
  F^*g=dr^2+A(s,r)^2\,ds^2,
\end{equation}
where \(A(s,r)=|\partial_sF(s,r)|_g>0.\) Since $g$ is real analytic, $A$ is real analytic in its arguments. Since $\gamma_*$ is a geodesic and $s$ is arclength along it, we have 
\begin{equation} \label{eq: Gauss Lemma}
        A(s,0)=1,
    \qquad
    (\pp_r A)(s,0)=0.
\end{equation}
Moreover, the Jacobi equation gives
\begin{equation}
\label{eq:FermiJacobi}
(\pp_r^2A)(s,0)=-K_g(\gamma_*(s)).
\end{equation}
From \eqref{eq: Gauss Lemma}, by shrinking $\varepsilon > 0$ if necessary, we may assume 
\begin{equation} \label{eq: bound on A}
    \frac{3}{2} > A(s, u(s)) > \frac{1}{2} \quad \text{on }S^1
\end{equation}
for any $u \in \mathcal U_{\delta}$. For a graph $u$ we define
\begin{equation}
\label{defn:Q-u}
Q[u](s):=\sqrt{A(s,u)^2+|\pp_s u|^2}.
\end{equation}
Then we have
\begin{equation} \label{eq: bound on Q}
    \frac{1}{2} \leq Q[u](s) \leq \sqrt{\frac{9}{4} + \varepsilon^2} \leq 2
\end{equation}
for any $s \in S^1$ and $u \in \mathcal U_{\delta}$. 

As an application, we have the following regularity. 
\begin{lemma}
\label{lem:uniform}
For every integer $m>2$, there exist constants $B_m<\infty$ (depending on $\Gamma$) such that if $t\geq 1$, $u^t\in \mathcal{U}_\delta\cap C^\infty(S^1)$ such that $[\Gamma_t]=[\gamma_{u^t}]$ in $\Omega$, then 
\[
    \|u^t\|_{H^m(S^1)}\leq B_m .
\]
\end{lemma}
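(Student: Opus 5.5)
Since $\sup_{\Gamma_t}|k|\to0$ by Lemma~\ref{lem:global} and the flow lives on a compact surface, the plan is to first obtain uniform higher-order curvature bounds along the flow, and then convert them into Sobolev bounds on the graph function $u^t$. For the first step we use interior parabolic estimates for CSF on a surface: with $|k|\le C_0$ on $[0,\infty)$, the standard evolution equations $\partial_t \partial_\sigma^j k = \partial_\sigma^{j+2}k + (\text{lower order terms involving } k,\ \partial_\sigma^i k,\ \text{and covariant derivatives of the curvature } K_g)$ (with $\sigma$ the arclength of $\Gamma_t$) combined with a Shi-type/Bernstein maximum principle argument on unit time windows $[t-1,t]$ give $\sup_{\Gamma_t}|\partial_\sigma^j k|\le C_j$ for all $t\ge1$ and all $j\ge0$, with $C_j$ depending on $\Gamma$ (through $C_0$ and $g$). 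This is where we use $t\ge 1$: the estimate is applied on $[t-1,t]$, so it does not need smoothness bounds on $\Gamma_0$ beyond those implicit in the flow.

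Next we translate these bounds into the graph description. Given $[\Gamma_t]=[\gamma_{u}]$ with $u=u^t\in\mathcal U_\delta$, the curve $\gamma_u(s)=F(s,u(s))$ has, in the Fermi metric \eqref{eq:Fermi-metric}, arclength element $d\sigma = Q[u]\,ds$ with $\frac12\le Q[u]\le 2$ by \eqref{eq: bound on Q}. Its geodesic curvature has the form
\[
k=\frac{A\,\partial_s^2u}{Q[u]^3}+\Phi(s,u,\partial_s u),
\]
with $\Phi$ real analytic (built from $A,\partial_sA,\partial_rA$). The leading coefficient $A/Q^3$ is bounded below by a positive constant because of \eqref{eq: bound on A} and \eqref{eq: bound on Q}. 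Hence $\partial_s^2 u = Q^3A^{-1}(k-\Phi)$, and $|\partial_s^2u|$ is bounded uniformly. Differentiating repeatedly, with $\partial_s = Q\,\partial_\sigma$, gives $\partial_s^{j+2}u = Q^3A^{-1}\partial_s^j k + P_j(s,u,\partial_su,\dots,\partial_s^{j+1}u)$ with $P_j$ smooth (polynomial in the higher derivatives). By induction on $j$, using the bounds $|\partial_\sigma^i k|\le C_i$ and the $C^1$ bound \eqref{eq: C^1 bound of u}, we obtain $\|\partial_s^{m}u\|_{C^0}\le B'_m$ for every $m$. Integrating over $S^1$ then gives the $H^m$ bound $B_m$. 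The constants depend only on $\varepsilon,\delta,A$ (hence $\gamma_*$) and the $C_j$, so they are independent of $t$ and of the particular $u^t$.

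The main obstacle is the first step: justifying uniform-in-time derivative bounds for curvature of an immersed CSF on a curved surface given only $|k|\le C_0$. We would cite or reproduce the standard argument (as in Grayson \cite{Grayson1989} and Angenent \cite{Angenent2005}). In outline: consider $\Psi_j = (t-t_0)\,|\partial_\sigma^j k|^2 + c\,|\partial_\sigma^{j-1}k|^2$ and proceed inductively. The ambient curvature terms are bounded by compactness of $\Sigma$, and the length is bounded by $L(0)$, which keeps integral quantities under control. A subtle point is that the bounds must be uniform for all $t\ge1$ rather than merely local in time; this holds because the estimate on each window $[t-1,t]$ uses only the uniform bound $C_0$.
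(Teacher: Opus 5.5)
Your argument is correct, but it is quantitative, whereas the paper argues softly by contradiction and compactness. The paper takes $u_i=u^{t_i}$ with $\|u_i\|_{H^m}\to\infty$. It extracts a $C^1$-convergent subsequence from the $H^2$ bound and uses Lemma~\ref{lem:global} to get $[\Gamma_{t_i}]\to[\bar\gamma]$ in $\Omega$. It then invokes the appendix Lemma~\ref{lem:graph-reparam-compactness}, which uses local inversion of the Fermi map $F$ and control of the reparametrizations $h_i$, to upgrade to $u_i\to u_\infty$ in $C^\infty$. No curvature formula or derivative estimate is ever written down.

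You instead bound $\sup_{\Gamma_t}|\partial_\sigma^j k|$ and invert the graph-curvature identity $k=AQ^{-3}\partial_s^2u+\Phi(s,u,\partial_s u)$ inductively. This identity is consistent with $M(u)=-Ak$ and the expansion of $M$ in the proof of Lemma~\ref{lem:graph-form-csf}. Your route is essentially a quantitative version of the appendix lemma, and it buys two things:
\begin{itemize}
\item It is insensitive to the non-uniqueness of the representative $u^t$, since everything reduces to reparametrization-invariant sup norms.
\item It treats bounded and unbounded times uniformly. The paper's one-line treatment of bounded $t_i$ implicitly needs the appendix lemma again.
\end{itemize}

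Two small remarks. First, your $P_j$ must also be allowed to depend on $\partial_\sigma^i k$ for $i<j$. These terms come from derivatives of $Q^3A^{-1}$ landing on $k$, and from rewriting $\partial_s^j k$ via $\partial_s=Q\,\partial_\sigma$. This does not affect the induction.

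Second, the step you call the main obstacle needs no Shi-type estimates. Lemma~\ref{lem:global} already forces $\sup_{\Gamma_t}|\partial_\sigma^j k|\to0$ as $t\to\infty$ for each $j$. Otherwise a violating sequence would have a subsequence converging in $\Omega$ to a geodesic. For a geodesic these reparametrization-invariant quantities vanish, and they are continuous in the $C^\infty$ topology. On each compact interval $[1,T]$, smoothness of the flow gives the bound directly.
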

\begin{proof}
Suppose the conclusion fails for some \(m>2\).  
Then, for each \(i\), there are \(t_i\geq 1\) and \(u_i:=u^{t_i}\in\mathcal U_\delta\cap C^\infty(S^1)\) such that \([\Gamma_{t_i}]=[\gamma_{u_i}]\), \(\|u_i\|_{H^m}>i.\) 
If $t_\infty:=\sup_i t_i<\infty$, then we obtain a contradiction since the CSF is smooth on $[\frac{1}{2},t_\infty+1]$. 
Thus, we may assume that $t_i\to \infty$. 
After passing to a subsequence, the \(H^2\)-bound and the compact embedding \(H^2(S^1)\Subset C^1(S^1)\) give $u_i\rightharpoonup u_\infty$ in $H^2(S^1)$, $u_i\to u_\infty$ in $C^1(S^1)$ for some $u_\infty\in\mathcal U_\delta.$ 
Lemma~\ref{lem:global} gives, after passing to a further subsequence, an immersed closed geodesic \(\bar\gamma\) such that $[\gamma_{u_i}]=[\Gamma_{t_i}]\to [\bar \gamma]$ in $\Omega$. 
Since $u_i\to u_\infty$ in $C^{1}(S^1)$, Lemma~\ref{lem:graph-reparam-compactness} shows that $u_\infty\in\mathcal U_\delta\cap C^\infty(S^1)$ and after passing to a further subsequence, $u_i\to u_\infty$ in $C^\infty(S^1)$. In particular, $\sup_i\|u_i\|_{H^m}<\infty,$ contradicting \(\|u_i\|_{H^m}\to\infty.\) This proves the lemma. 
\end{proof}

\section{The length functional}
\label{sec:analytic}
We continue to use the same setup as in the previous section. For $u\in\mathring{\mathcal U}_{\delta}$, we define the \emph{length functional} by
\[
    E(u)
    :=
    L(\gamma_u)
    =
    \int_{S^1}Q[u](s)\,ds,
\]
where $\mathring{\mathcal{U}}_\delta$ is the interior of $\mathcal{U}_\delta$ defined in \eqref{eq:U-delta} and $Q$ is defined by \eqref{defn:Q-u}. 

We denote by $L^2(S^1,ds)$ the Hilbert space, with inner product $\langle f,h\rangle_{L^2(ds)}
:=
\int_{S^1}fh\,ds$. 
\begin{lemma}
\label{lem:LS-preparation}
After decreasing $\delta>0$ if necessary, the following hold. 
\begin{enumerate}
    \item The length functional \(E\colon\mathring{\mathcal U}_{\delta}\to \mathbb R\) is real analytic.
    \item The functional $E$ has an $L^2(S^1,ds)$ gradient \(M\colon\mathring{\mathcal U}_{\delta}
        \to
        L^2(S^1,ds)\) given by \[
        M(u)
        =
        \frac{A(s,u)(\pp_r A)(s,u)}{Q[u]}
        -
        \partial_s\left(\frac{\pp_s u}{Q[u]}\right).
    \]
Moreover, \(M\colon\mathring{\mathcal U}_{\delta}
        \to 
        L^2(S^1,ds)\) is real analytic. 
    \item We have \(M(0)=0\), and
    \begin{equation}
    \label{eq:jacobi}
        M'(0)v
        =
        -\pp_s^2 v
        -
        K_g(\gamma_*(s))v.
    \end{equation}
As an operator \(M'(0)\colon H^2(S^1)
        \to 
        L^2(S^1,ds)\), it is Fredholm of index zero. 
\end{enumerate}
\end{lemma}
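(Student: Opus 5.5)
We prove the three items in order. Everything rests on one fact: the integrand $Q[u]$ is a real-analytic function of the finitely many quantities $(s,u(s),\pp_s u(s))$, and $H^2(S^1)$ is a Banach algebra that embeds in $C^1$.

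\textbf{Item 1.} Since $A$ is real analytic on $S^1\times(-\varepsilon,\varepsilon)$, its Taylor expansion in $r$ about $0$ converges. The radius can be taken uniform in $s$ by compactness, after shrinking $\varepsilon$. We then want to show that $u\mapsto A(\cdot,u(\cdot))$ is real analytic from $\mathring{\mathcal U}_\delta\subset H^2$ into $H^1$ (or even into $H^2$), with the power series
\[
A(s,u)=\sum_n \tfrac{1}{n!}(\pp_r^n A)(s,0)\,u^n .
\]
Convergence will follow from the Banach algebra estimate $\|u^n\|_{H^2}\le C^n\|u\|_{H^2}^n$ together with Cauchy estimates on $\pp_r^nA(\cdot,0)$ in $C^2$. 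This is where $\delta$ may need to be decreased, so that $C\delta$ lies below the radius of convergence.

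The same argument applies to $\pp_rA$. Next, $u\mapsto |\pp_s u|^2$ is a continuous quadratic map $H^2\to H^1$, hence analytic. On $\mathring{\mathcal U}_\delta$, the quantity $Q^2=A^2+|\pp_su|^2$ takes values in $[1/4,4]$ by \eqref{eq: bound on Q}. The map $x\mapsto\sqrt x$ is analytic near $[1/4,4]$, and composing a convergent power series with an analytic map into the Banach algebra $H^1\subset C^0$ stays analytic. So $u\mapsto Q[u]$ is analytic into $H^1$, and integration is a bounded linear functional, which gives analyticity of $E$.

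\textbf{Item 2.} We differentiate under the integral. Using $\pp_u Q=A\pp_rA/Q$ and $\pp_{u_s}Q=\pp_su/Q$, we get
\[
E'(u)v=\int \Big(\frac{A\pp_rA}{Q}v+\frac{\pp_su}{Q}\pp_sv\Big)\,ds .
\]
Integrating by parts on the closed curve gives the stated formula for $M$. Integration by parts is legitimate because $\pp_su/Q\in H^1$.

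For analyticity of $M$ into $L^2$, we observe the following:
\begin{itemize}
\item $A\pp_rA/Q$ is analytic into $H^1\subset L^2$;
\item $\pp_su/Q$ is a product of the analytic maps $u\mapsto\pp_su\in H^1$ and $u\mapsto 1/Q\in H^1$, hence analytic into $H^1$;
\item $\pp_s\colon H^1\to L^2$ is bounded linear.
\end{itemize}
Thus $M$ is analytic into $L^2$.

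\textbf{Item 3.} First, $M(0)=0$ because $A(s,0)=1$, $\pp_rA(s,0)=0$, and $\pp_s u=0$. For the linearization, we differentiate at $u=0$ using \eqref{eq: Gauss Lemma}:
\begin{itemize}
\item the first term gives $\pp_r^2A(s,0)\,v$, since terms carrying a factor $\pp_rA(s,0)$ vanish;
\item the second term gives $-\pp_s^2v$, since $Q[0]=1$ and $Q$ depends quadratically on $\pp_su$.
\end{itemize}
Combining this with \eqref{eq:FermiJacobi} yields \eqref{eq:jacobi}.

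Fredholmness follows because $-\pp_s^2+1\colon H^2\to L^2$ is an isomorphism, as one sees from Fourier series. The operator $M'(0)$ differs from it by the multiplication operator $v\mapsto -(K_g+1)v$, which is compact from $H^2$ to $L^2$ by Rellich. A compact perturbation of an isomorphism is Fredholm of index zero.

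The main obstacle is the analyticity of the Nemytskii-type composition $u\mapsto A(\cdot,u)$, which must hold with explicit uniform radii. The rest is bookkeeping.
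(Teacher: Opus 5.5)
Your proposal is correct and follows essentially the same route as the paper: analyticity of $u\mapsto A(\cdot,u)$ and $u\mapsto(\partial_rA)(\cdot,u)$ into $H^2$, analyticity of the square root on positive $H^1$ functions, the same first-variation and integration-by-parts computation for $M$, and the same linearization via \eqref{eq: Gauss Lemma} and \eqref{eq:FermiJacobi}. The only differences are that you spell out the power-series and Cauchy-estimate mechanism behind the analyticity of the composition, which the paper simply asserts, and that you prove the Fredholm property directly: you view $M'(0)$ as a Rellich-compact perturbation of the isomorphism $-\partial_s^2+1\colon H^2\to L^2$, whereas the paper cites standard elliptic theory.
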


\begin{proof}
Since $A$ is real analytic on a neighborhood of $S^1\times\{0\}$, after decreasing $\delta > 0$ the maps
\(u\mapsto A(\,\cdot\,,u(\,\cdot\,)),
    u\mapsto (\pp_r A)(\,\cdot\,,u(\,\cdot\,))\) are real analytic on $\mathring{\mathcal U}_{\delta}$ with values in $H^2(S^1)$. The maps $w\mapsto \sqrt{w}$ and $w\mapsto w^{-1/2}$ are real analytic from $\left\{
w\in H^1(S^1):
\min_{S^1}w>0
\right\}$ to $H^1(S^1)$. Combined with \eqref{eq: bound on Q}, it follows that $u\mapsto Q[u]$ is a real analytic map from $\mathring{\mathcal U}_{\delta}$ to $H^1(S^1)$. Consequently, $E$ is a real analytic map from $\mathring{\mathcal U}_{\delta}$ to $\mathbb R$. This proves the first assertion.

The $L^2(S^1,ds)$ gradient $M$ of $E$ is defined by 
\[
    DE(u)[v]:=\left.\frac{d}{dt}\right|_{t=0}E(u+tv)
    =
    \langle M(u),v\rangle_{L^2(ds)}
    \qquad
    \text{for all }v\in H^2(S^1).
\]
For $u,v\in H^2(S^1)$,
\[
    DQ[u][v]
    =
    \frac{
        A(s,u)(\pp_r A)(s,u)v+\pp_s u\pp_s v
    }{Q[u]}.
\]
Hence
\[
\begin{aligned}
    DE(u)[v]
    &=
    \int_{S^1}
    \frac{
        A(s,u)(\pp_r A)(s,u)v+\pp_s u\,\pp_s v
    }{Q[u]}
    \,ds 
    =
    \int_{S^1}
    \frac{A(s,u)(\pp_r A)(s,u)}{Q[u]}\,v\,ds
    +
    \int_{S^1}
    \frac{\pp_s u}{Q[u]}\pp_s v\,ds.
\end{aligned}
\]
Integration by parts gives 
\[
    DE(u)[v]
    =
    \int_{S^1}
    \left[
        \frac{A(s,u)(\pp_r A)(s,u)}{Q[u]}
        -
        \partial_s\left(\frac{\pp_s u}{Q[u]}\right)
    \right]v\,ds.
\]
Thus
\begin{equation} \label{eq:M-formula}
    M(u)
    =
    \frac{A(s,u)(\pp_r A)(s,u)}{Q[u]}
    -
    \partial_s\left(\frac{\pp_s u}{Q[u]}\right).
\end{equation}
Since $Q[u]$ is uniformly bounded below and $H^1(S^1)$ is an algebra, it follows that $M\colon\mathring{\mathcal U}_\delta\to L^2$  is real analytic. This proves the second assertion.

Using $A(s,0)=1$ and $(\pp_r A)(s,0)=0$, we obtain \(M(0)=0\).  We now
linearize $M$ at zero. For the first term, using \eqref{eq:FermiJacobi} and $Q[0]=1$,
\[
    D\left[
        \frac{A(s,u)(\pp_r A)(s,u)}{Q[u]}
    \right]_{u=0}[v]
    =
    (\pp_r^2A)(s,0)v=-K_g(\gamma_*(s))v.
\]
For the second term,
\[
    D\left[
        -\partial_s\left(\frac{\pp_s u}{Q[u]}\right)
    \right]_{u=0}[v]
    =
    -\pp_s^2 v.
\]
This proves \eqref{eq:jacobi}. Now the third assertion follows from the standard elliptic PDE theory (for instance see \cite[\S 6.2.3]{Evans1998}). This completes the proof. 
\end{proof}
Since the metric $g$ and the functional $E$ are real analytic, we now recall the \L ojasiewicz-Simon gradient inequality. 
\begin{proposition}
\label{prop:LS-inequality} 
By shrinking $\delta$ further, there exist constants \( C>0,\beta\in(0,1/2]\) such that if $u\in \mathring{\mathcal{U}}_\delta$, then
\begin{equation}
\label{eq:LS}
    |E(u)-E(0)|^{1-\beta}
    \leq
    C\|M(u)\|_{L^2(S^1,ds)}.
\end{equation}
\end{proposition}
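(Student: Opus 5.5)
The plan is to apply the abstract Łojasiewicz--Simon gradient inequality of Feehan and Maridakis \cite{FeehanMaridakis2020} directly, using Lemma~\ref{lem:LS-preparation} to verify its hypotheses. The setting will be $X:=H^2(S^1)$, $\tilde X:=L^2(S^1,ds)$, and the continuous embedding $X\hookrightarrow X^*$ induced by the $L^2(ds)$ pairing $\langle\cdot,\cdot\rangle_{L^2(ds)}$. In Feehan--Maridakis, the gradient-map version takes an analytic functional $E\colon U\subset X\to\R$ together with an analytic gradient map $M\colon U\to \tilde X$ with $DE(u)[v]=\langle M(u),v\rangle$ for all $v\in X$. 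Here $\tilde X\hookrightarrow X^*$ continuously and $X\hookrightarrow\tilde X$ densely. One further requires that $0$ be a critical point and that $M'(0)\colon X\to\tilde X$ be Fredholm of index zero. Its conclusion is then exactly an inequality $|E(u)-E(0)|^{1-\beta}\le C\|M(u)\|_{\tilde X}$ for $\|u\|_X<\sigma$, with $\beta\in(0,1/2]$.

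I will check the hypotheses in order. Lemma~\ref{lem:LS-preparation}(1) gives real analyticity of $E$ on $\mathring{\mathcal U}_\delta$. Part (2) gives the gradient identity and the analyticity of $M$ into $L^2$. Part (3) gives $M(0)=0$, together with the Fredholm index-zero property of the Jacobi operator $-\pp_s^2-K_g(\gamma_*)$ from $H^2$ to $L^2$. For the embeddings, $H^2\hookrightarrow L^2$ is continuous and dense, since smooth functions are dense. The map $L^2\to (H^2)^*$, $f\mapsto\langle f,\cdot\rangle_{L^2(ds)}$, is continuous (indeed injective), because $|\langle f,v\rangle|\le \|f\|_{L^2}\|v\|_{H^2}$. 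The theorem produces a radius $\sigma>0$. I then replace $\delta$ by $\min(\delta,\sigma)$, which keeps all earlier conclusions valid, since they were stated as holding after shrinking $\delta$. This gives \eqref{eq:LS} on $\mathring{\mathcal U}_\delta$.

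The main obstacle is making sure the abstract theorem is being invoked with the correct hypotheses. One issue is that Feehan--Maridakis phrase the Fredholm condition for $M'(0)$ as a map $X\to \tilde X$, and they also require $M$ to be analytic into $\tilde X$, not merely into $X^*$. Both of these are supplied by Lemma~\ref{lem:LS-preparation}. The other issue is that the resulting exponent is $\theta\in[1/2,1)$ in their notation, so I set $\beta=1-\theta\in(0,1/2]$.

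Should the abstract theorem be unavailable in exactly this form, I would instead run Simon's Lyapunov--Schmidt argument. The steps would be:
\begin{enumerate}
\item Decompose $H^2=\ker M'(0)\oplus (\ker)^{\perp}$, where $\ker M'(0)$ is finite-dimensional and consists of smooth functions.
\item Solve $\Pi^\perp M(\xi+w)=0$ for $w=w(\xi)$ analytically by the analytic implicit function theorem, using invertibility of $M'(0)+\Pi$.
\item Apply the classical finite-dimensional Łojasiewicz inequality to the analytic function $\xi\mapsto E(\xi+w(\xi))$.
\item Transfer the resulting estimate back to a general $u$ via the bound $\|u-(\xi+w(\xi))\|_{H^2}\lesssim\|M(u)\|_{L^2}$, combined with a Taylor expansion of $E$.
\end{enumerate}
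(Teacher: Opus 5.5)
Your proposal is correct and is essentially the paper's proof: both take $X=H^2(S^1)$ and $\tilde X=L^2(S^1,ds)$, embed $\tilde X$ into $H^2(S^1)^*$ via the $L^2(ds)$ pairing, and invoke \cite[Theorem~2]{FeehanMaridakis2020} with its hypotheses supplied by Lemma~\ref{lem:LS-preparation}. The one hypothesis of that theorem you do not list, and which the paper explicitly checks, is that the embedding $H^2(S^1)\hookrightarrow H^2(S^1)^*$ be \emph{definite}; it holds trivially here because $\langle v,v\rangle_{L^2(ds)}=\|v\|_{L^2(ds)}^2>0$ for $v\neq 0$.
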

\begin{proof}
We have the continuous embeddings
\(H^2(S^1)
    \hookrightarrow
    L^2(S^1,ds)
    \hookrightarrow
    H^2(S^1)^*,\)
where the second embedding is induced by the $L^2(ds)$ pairing and the
resulting embedding $H^2(S^1)\hookrightarrow H^2(S^1)^*$ is definite in the
sense of \cite{FeehanMaridakis2020}.  Together with
Lemma~\ref{lem:LS-preparation}, we may apply
\cite[Theorem~2]{FeehanMaridakis2020} and obtain \eqref{eq:LS} (see also \cite[Theorem~3]{Simon1983}). 
\end{proof}

\section{Trapping and convergence}
\label{sec:trapping}
We continue to use the same setup as in the previous sections. Recall that $\Gamma$ is the original flow fixed in Section~\ref{sec:prelims}. 
\begin{definition}
Let $I\subset[0,\infty)$ be an interval.  We say that $\Gamma$ is a \emph{graphical solution} to CSF on $I$ if there exists a smooth map $u\colon S^1\times I\to(-\varepsilon,\varepsilon)$ such that $u(\cdot,t)\in\mathring{\mathcal U}_\delta$ and for every \(t\in I\) 
$[\Gamma_t]=[\gamma_{u(\cdot,t)}]$ in $\Omega$. 
\end{definition}
Any function $u$ satisfying the above equation will be called a normal graph function associated with $\Gamma$ on $I$. In general, such a function may be non-unique. The next lemma computes the parabolic equation for a graphical solution to CSF. 
\begin{lemma}
\label{lem:graph-form-csf}
Let $I\subset[0,\infty)$ and suppose that $\Gamma$ is a graphical solution to CSF on $I$ with a normal graph function $u$. Then $u$ satisfies 
\begin{equation}
\label{eq:graph-csf}
\partial_t u=-b(u)M(u),
\qquad
b(u):=\frac{Q[u]}{A(s,u)^2},
\end{equation}
where $Q$ is defined by \eqref{defn:Q-u}. Along this evolution,
\begin{equation}
\label{eq:graph-energy-identity}
-\frac{d}{dt}E(u(\cdot, t))
  =\int_{S^1} b(u)M(u)^2\,ds
  =\int_{\Gamma_t} k^2\,d\lambda_t.
\end{equation}
Here $d\lambda_t$ is arclength on $\Gamma_t$.  
Moreover, for $u$ in a sufficiently small $H^2$ ball, \eqref{eq:graph-csf} is uniformly parabolic. 
Conversely, if \(u\in C^\infty(S^1\times I)\), with \(u(\cdot,t)\in\mathring{\mathcal U}_\delta\), satisfies \eqref{eq:graph-csf}, then $t\mapsto \gamma_{u(\cdot,t)}$ is a solution to CSF. 
\end{lemma}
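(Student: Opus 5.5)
The plan is to work entirely in the Fermi chart $F$, where $F^*g=dr^2+A(s,r)^2ds^2$, and to compute how a normal graph moves under CSF. The graph is $\gamma_u(s)=F(s,u(s))$, so $\partial_s\gamma_u=F_s+u_sF_r$. In the chart $F_r$ is a unit vector, $|F_s|=A$, and $F_r\perp F_s$. Hence $|\partial_s\gamma_u|=Q[u]$, and the line element on $\gamma_u$ is $d\lambda=Q\,ds$. A unit normal to $\gamma_u$ is
\[
N=\frac{A\,F_r-u_sA^{-1}F_s}{Q}.
\]
One checks that $N$ is orthogonal to $F_s+u_sF_r$ and has norm one. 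For the time variation $\partial_t\gamma_u=u_tF_r$, the normal velocity is $\langle u_tF_r,N\rangle=u_tA/Q$.

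Next I would identify the curvature. Let $\kappa$ be the signed geodesic curvature with respect to $N$, with sign convention $\boldsymbol{k}=-\kappa N$ chosen to match the first variation. The first variation of length under a normal displacement $\varphi N$ is $-\int\langle\boldsymbol k,\varphi N\rangle\,d\lambda$. For the variation $u\mapsto u+\tau v$ the normal component is $\varphi=vA/Q$. Comparing with $DE(u)[v]=\int M(u)v\,ds$ from Lemma~\ref{lem:LS-preparation} gives
\[
\int M v\,ds=\int \kappa\,\frac{vA}{Q}\,Q\,ds=\int \kappa A v\,ds
\]
for all $v$, hence $\kappa=M/A$. This avoids any direct Christoffel computation; it only uses the first variation formula, which holds for any variation.

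CSF up to tangential reparametrization says exactly that the normal velocity equals $-\kappa$, that is, $u_tA/Q=-M/A$. This gives $u_t=-\frac{Q}{A^2}M=-b(u)M(u)$. The tangential part of the true flow does not matter, because the definition of graphical solution only asks for equality in $\Omega$. The normal component of $\partial_t$ of any parametrization of the moving curve is geometric, and on the graph parametrization it equals $u_tA/Q$. Here I would note that $[\Gamma_t]=[\gamma_{u(\cdot,t)}]$ with $u$ smooth in $t$ lets us compare normal velocities pointwise: the reparametrization $\varphi_t$ can be taken smooth in $t$, since it is obtained by intersecting $\Gamma_t$ with the normal geodesics through $F$. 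I expect this identification to be the only subtle point.

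For the energy identity I would compute
\[
-\frac{d}{dt}E=-\int M u_t\,ds=\int bM^2\,ds.
\]
Using $\kappa=M/A$ and $d\lambda=Q\,ds$,
\[
\int k^2\,d\lambda=\int \frac{M^2}{A^2}\,Q\,ds=\int bM^2\,ds .
\]

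For parabolicity, expand the principal part of $-bM$. It is $b\,\partial_s(u_s/Q)$, whose top-order term is
\[
\frac{Q}{A^2}\cdot\frac{A^2}{Q^3}\,u_{ss}=\frac{u_{ss}}{Q^2}.
\]
By \eqref{eq: bound on Q} the coefficient $Q^{-2}$ lies in $[1/4,4]$, so the equation is uniformly parabolic once $u$ lies in the $H^2$ ball, which controls $u$ in $C^1$.

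For the converse, take $u$ smooth solving \eqref{eq:graph-csf}. Then the normal velocity of $\gamma_{u(\cdot,t)}$ is $u_tA/Q=-M/A=-\kappa$, so $(\partial_t\gamma_u)^\perp=\boldsymbol k$, which is CSF in the form \eqref{eqn:CSF}.
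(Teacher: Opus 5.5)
Your proposal is correct and follows essentially the same route as the paper: the Fermi-chart computation of $d\lambda=Q\,ds$, the unit normal, the normal velocity $A u_t/Q$, and identification of the curvature through the first variation formula ($M=-Ak$ in the paper's sign convention, $\kappa=M/A$ in yours), followed by the same top-order expansion $u_{ss}/Q^2$ for parabolicity and the same computation for the converse. Your remark that the reparametrizations relating $\Gamma_t$ to $\gamma_{u(\cdot,t)}$ can be chosen smooth in $t$, using transversality to the normal geodesics, makes explicit a point the paper leaves implicit when it writes $(\partial_t X)^\perp=\boldsymbol{k}_t$.
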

\begin{remark}
Note that the last integral in \eqref{eq:graph-energy-identity} is over $\Gamma_t$ with its induced arclength, so it counts every iteration of $\Gamma_t$.
\end{remark}

\begin{proof}[Proof of Lemma \ref{lem:graph-form-csf}]
Since $\Gamma_t$ is graphical, there exists $u(\cdot,t):S^1\to (-\varepsilon,\varepsilon)$ such that $[\Gamma_t]=[\gamma_{u(\cdot,t)}]$ in $\Omega$.  Set $X(\cdot,t):=F(\cdot,u(\cdot,t))$ so that $(\partial_tX)^\perp=\boldsymbol{k}_t$, where $\boldsymbol{k}_t$ is the curvature vector of $X(\cdot,t)$.  
Write $A=A(s,u)$. We have 
\[
\partial_s X=\pp_s F+\pp_s u\,\pp_rF, \quad \pp_tX = \pp_t u\,\pp_rF.
\]
The arclength measure $d\lambda_{t}$ and the oriented unit normal $N_t$ on $\Gamma_t$ is given by
\[
d\lambda_t=Q\,ds, \quad N_t=\frac{A}{Q}\pp_r F-\frac{\pp_s u}{AQ}\pp_s F.
\]
Since \(\langle \pp_t X,N_t\rangle=\frac{A}{Q}\pp_t u\), the CSF equation gives
\(\pp_t u=(Q/A)k_t.\) 
The first variation formula then gives
\[
DE(u)[v]
=
-\int_{\Gamma_t}k\langle v\, \pp_r F,N_t\rangle\,d\lambda_t
=
-\int_0^\ell A\,k\, v\,ds.
\]
Hence $M(u)=-Ak$, which implies
\[
\pp_t u=-\frac{Q}{A^2}M(u),
\]
showing \eqref{eq:graph-csf}. \eqref{eq:graph-energy-identity} now follows.  By
\eqref{eq:M-formula},
\[
\begin{split}
M(u)
& =
\frac{A(s,u)(\pp_r A)(s,u)}{Q}
-
\partial_s\left(\frac{\pp_s u}{Q}\right) 
=
\frac{A\pp_r A}{Q}
-\frac{\pp_s^2 u}{Q}
+\frac{\pp_s u}{Q^3}
 \left[A\bigl(\pp_s A+\pp_r A\, \pp_s u\bigr)+(\pp_s u)(\pp_s^2 u)\right] \\
& =
-\frac{A^2}{Q^3}\pp_s^2 u
+\frac{A\pp_r A}{Q}
+\frac{A\,\pp_s u\bigl(\pp_s A+\pp_r A \pp_s u\bigr)}{Q^3} 
=
-\frac{A^2}{Q^3}\pp_s^2 u
+\text{lower-order terms}.
\end{split}
\]
Consequently,
\[
\pp_t u
=
\frac{1}{Q^2}\pp_s^2 u
-\frac{\pp_r A}{A}
-\frac{\pp_s u(\pp_s A+\pp_r A\pp_s u)}{AQ^2}.
\]
Since $H^2(S^1)\hookrightarrow C^1(S^1)$, the coefficient $Q^{-2}$ is bounded above and bounded away from zero on a sufficiently small $H^2$ ball. Thus \eqref{eq:graph-csf} is uniformly parabolic. 

For the final statement, suppose that $u(\cdot,t)\in\mathring{\mathcal U}_\delta$ satisfies  \eqref{eq:graph-csf}. Setting $X(s,t):=F(s,u(s,t))$, we have $\pp_t X=\pp_t u\,\pp_r F=-\frac{Q}{A^2}M(u)\pp_r F$. Since $M(u)=-Ak_{\gamma_u}$ and $N_{\gamma_u}=\frac{A}{Q}\pp_r F-\frac{\pp_s u}{AQ}\pp_s F$, we obtain 
$$\langle \pp_t X,N_{\gamma_u}\rangle =-\frac{Q}{A^2}(-Ak_{\gamma_u})\langle \pp_r F,N_{\gamma_u}\rangle =k_{\gamma_u},$$
showing \eqref{eqn:CSF}. This completes the proof. 
\end{proof}

\begin{definition}
On any interval on which the flow $\Gamma$ is a graphical solution to CSF with graph function $u$, we define the difference of the length functional
\[
    e(t):=L(\Gamma_t)-L_\infty= E(u(\cdot, t)) - E(0).
\]
By our choice of $\gamma_*$, $e(t)$ is nonnegative and nonincreasing, and $e(t)\to0$ as $t\to\infty$.
\end{definition}
 
Note that by \eqref{eq: bound on A} and \eqref{eq: bound on Q}, there are universal constants $b_+, b_- > 0$ such that \(b_-\leq b(u)(s)\leq b_+\) for every $u\in\mathring{\mathcal U}_{\delta}$. 
\begin{lemma}
\label{lem:pathlength}
There exists $C_0>0$ such that the following holds. Suppose that $\Gamma$ is
a graphical solution to CSF on $[a,b]$ with normal graph function $u$ with $\|u(\cdot, t)\|_{H^2}<\delta$ for every $t\in [a,b]$. Then
\begin{equation}
\label{eq:pathlength}
    \int_a^b\|\pp_t u( \cdot, t)\|_{L^2(S^1,ds)}\,dt
    \leq
    C_0e(a)^\beta.
\end{equation}
\end{lemma}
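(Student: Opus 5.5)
The plan is the standard \L ojasiewicz--Simon argument: differentiate $e(t)^\beta$ in time and combine the energy identity \eqref{eq:graph-energy-identity} with the gradient inequality \eqref{eq:LS}. Since $u$ is smooth in $t$ and $E$ is real analytic on $\mathring{\mathcal U}_\delta$, the function $t\mapsto e(t)=E(u(\cdot,t))-E(0)$ is differentiable on $[a,b]$. By Lemma~\ref{lem:graph-form-csf},
\[
-e'(t)=\int_{S^1} b(u)M(u)^2\,ds \;\geq\; b_-\|M(u)\|_{L^2(ds)}^2 ,
\]
and also
\[
\|\pp_t u\|_{L^2(ds)}=\|b(u)M(u)\|_{L^2(ds)}\leq b_+\|M(u)\|_{L^2(ds)} .
\]
Hence $-e'(t)\geq c\,\|M(u)\|_{L^2}\,\|\pp_t u\|_{L^2}$ with $c=b_-/b_+$.

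At any time where $e(t)>0$, I then compute
\[
-\frac{d}{dt}e(t)^\beta=\beta e^{\beta-1}(-e').
\]
Proposition~\ref{prop:LS-inequality} applies because $u(\cdot,t)\in\mathring{\mathcal U}_\delta$, and $e\geq 0$, so $e^{1-\beta}\leq C\|M(u)\|_{L^2}$. Therefore
\[
-\frac{d}{dt}e^\beta\geq \beta\,\frac{c\,\|M(u)\|_{L^2}\|\pp_t u\|_{L^2}}{C\|M(u)\|_{L^2}}=\frac{\beta c}{C}\,\|\pp_t u\|_{L^2}.
\]
This step requires $M(u)\neq 0$. If $M(u(\cdot,t))=0$ at some time, then $\pp_t u=0$ at that time and the inequality $-\frac{d}{dt}e^\beta\geq \frac{\beta c}{C}\|\pp_t u\|_{L^2}$ holds trivially provided the left side makes sense.

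The technical point, which is the only real obstacle, is handling the set where $e$ vanishes, since $e^\beta$ is not differentiable there. Because $e$ is nonincreasing and nonnegative, if $e(t_0)=0$ then $e\equiv0$ on $[t_0,b]$. Then $-e'=0$ there, so $M(u)=0$ and $\pp_t u=0$, and that part of the interval contributes nothing to the integral. On $[a,t_0)$, $e>0$, and I integrate the differential inequality from $a$ to $t<t_0$ and let $t\to t_0$. This gives
\[
\int_a^b\|\pp_t u\|_{L^2}\,dt\leq \frac{C}{\beta c}\bigl(e(a)^\beta-e(b)^\beta\bigr)\leq C_0 e(a)^\beta,
\]
with $C_0=C b_+/(\beta b_-)$.

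One should also note that $e(t)=E(u(\cdot,t))-E(0)$ is independent of the choice of normal graph function, since both sides are lengths of $\Gamma_t$ and $\gamma_*$. The constant $C_0$ depends only on $C,\beta,b_\pm$, as required.
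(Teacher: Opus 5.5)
Your proposal is correct and follows the paper's proof essentially step for step. You differentiate $e^\beta$, combine the energy identity with the \L ojasiewicz--Simon inequality and $b_-\le b(u)\le b_+$ to get $-\frac{d}{dt}e^\beta\ge \frac{\beta b_-}{Cb_+}\|\partial_t u\|_{L^2}$, and handle the first zero $t_0$ of $e$ by noting that $\partial_t u\equiv 0$ on $[t_0,b]$; this gives the same constant $C_0=Cb_+/(\beta b_-)$, and the remaining differences (such as deducing $M(u)=0$ from $-e'\ge b_-\|M(u)\|_{L^2}^2$ rather than citing the first variation) are cosmetic.
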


\begin{proof}
Let $C,\beta$ be the constants given by Proposition~\ref{prop:LS-inequality}. 
Suppose first that $e(t)>0$ for $t\in [a,b]$. Using \eqref{eq:graph-energy-identity}, we compute for $t\in [a,b]$, 
\[
\begin{aligned}
    -\frac{d}{dt}e(t)^\beta
    &=
    \beta e(t)^{\beta-1}
    \int_{S^1}b(u)M(u)^2\,ds.
\end{aligned}
\]
Proposition~\ref{prop:LS-inequality} gives
\[
    e(t)^{1-\beta}
    \leq
    C\|M(u(\cdot, t))\|_{L^2}.
\]
In particular, $M(u(\cdot,t))\neq0$. Consequently,
\[
\begin{aligned}
    -\frac{d}{dt}e(t)^\beta\geq
    \frac{\beta}{C\|M(u(\cdot, t))\|_{L^2}}
    \int_{S^1}b(u)M(u)^2\,ds \geq
    \frac{\beta b_-}{C}
    \|M(u(\cdot, t))\|_{L^2}.
\end{aligned}
\]
Since
\(\pp_t u=-b(u)M(u)\)
and $b(u)\leq b_+$, we also have
\[
    \|\pp_t u(\cdot, t)\|_{L^2}
    \leq
    b_+\|M(u(\cdot, t))\|_{L^2}.
\]
Thus
\[
    -\frac{d}{dt}e(t)^\beta
    \geq
    \frac{\beta b_-}{Cb_+}
    \|\pp_t u(\cdot, t)\|_{L^2}.
\]
Integrating from $a$ to $b$ yields
\[
    \int_a^b\|\pp_t u(\cdot, t)\|_{L^2}\,dt
    \leq
    \frac{Cb_+}{\beta b_-}
    \bigl(e(a)^\beta-e(b)^\beta\bigr)
    \leq
    \frac{Cb_+}{\beta b_-}e(a)^\beta.
\]
Hence \eqref{eq:pathlength} holds with
\(C_0:=\frac{Cb_+}{\beta b_-}.\)

Suppose now that $e$ vanishes at some time in $[a,b]$, and define
$t_0:=\inf\{t\in[a,b]:e(t)=0\}$. Since length is nonincreasing and tends
to $L_\infty$, one has $L(t)=L_\infty$ for all $t\geq t_0$.
The first variation of length then gives
\[
    \int_{\Gamma_t}k^2\,d\lambda_t=0
    \qquad\text{for all }t\geq t_0,
\]
so the flow is stationary after time $t_0$.  
Applying the preceding argument on $[a,t_0)$ proves the same estimate. 
\end{proof}

\begin{proposition}
\label{prop:trapping}
Suppose $t_j\to\infty$ and $[\Gamma_{t_j}]$ converges to $[\gamma_*]$ in $\Omega$.  For every
$\rho\in(0,\delta)$, there exists $j_0$ sufficiently large such that $\Gamma$ is a graphical solution of CSF on $[t_{j_0},\infty)$  and its normal graph function $u$ satisfies
\[
    \|u(\cdot,t)\|_{H^2}<\rho<\delta
    \qquad\text{for all }t\geq t_{j_0}.
\]

\end{proposition}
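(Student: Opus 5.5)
We argue by a continuity (first-exit) argument driven by Lemma~\ref{lem:pathlength}, with the higher-norm control supplied by Lemma~\ref{lem:uniform}. Fix $\rho\in(0,\delta)$.

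\emph{Step 1: starting close.} Since $[\Gamma_{t_j}]\to[\gamma_*]$ in $\Omega$, there are reparametrizations with $\Gamma_{t_j}\circ\varphi_j\to\gamma_*$ smoothly. Since $F$ is a local diffeomorphism near the zero section, for $j$ large we can write $[\Gamma_{t_j}]=[\gamma_{u_j}]$ with $u_j\in C^\infty(S^1)$ and $\|u_j\|_{C^m}\to0$ for every $m$. In particular $\|u_j\|_{H^2}\to0$. Also $e(t_j)\to0$.

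Short-time existence for the uniformly parabolic equation \eqref{eq:graph-csf} (Lemma~\ref{lem:graph-form-csf}) then gives a smooth solution $u$ starting from $u_j$. Its converse part shows $\gamma_{u(\cdot,t)}$ solves CSF, and by uniqueness of CSF from $\Gamma_{t_j}$, $\Gamma$ is graphical on some $[t_j,t_j+\tau)$.

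\emph{Step 2: first exit time.} Let $T_j$ be the supremum of times $T$ such that $\Gamma$ is graphical on $[t_j,T)$ with $\|u(\cdot,t)\|_{H^2}<\rho$. Suppose $T_j<\infty$. The key step is to turn the $L^2$ path-length bound \eqref{eq:pathlength} into $H^2$ control. Lemma~\ref{lem:pathlength} gives, for $t<T_j$,
\[
\|u(\cdot,t)-u_j\|_{L^2}\le\int_{t_j}^t\|\partial_tu\|_{L^2}\,dt'\le C_0e(t_j)^\beta .
\]
Lemma~\ref{lem:uniform} gives $\|u(\cdot,t)\|_{H^3}\le B_3$ for $t\ge1$, and this bound is uniform in $j$. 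Interpolation then yields
\[
\|u(\cdot,t)-u_j\|_{H^2}\le C\|u-u_j\|_{L^2}^{1/3}\|u-u_j\|_{H^3}^{2/3}\le C\,(C_0e(t_j)^\beta)^{1/3}(B_3+\|u_j\|_{H^3})^{2/3}.
\]
Choose $j_0$ with this right-hand side plus $\|u_{j_0}\|_{H^2}$ less than $\rho/2$. Then $\|u(\cdot,t)\|_{H^2}<\rho/2$ on $[t_{j_0},T_{j_0})$.

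\emph{Step 3: extending past $T_{j_0}$.} The $H^3$ bound and the $L^2$-Cauchy property in time (finite path length) make $u(\cdot,t)$ converge as $t\uparrow T_{j_0}$, in $H^2$ and indeed smoothly by the higher $B_m$ bounds, to some $u_{T}$ with $\|u_T\|_{H^2}\le\rho/2$. Alternatively, one can use the smoothness of $\Gamma$ at time $T_{j_0}$ and Lemma~\ref{lem:uniform}'s compactness reasoning. Then $[\Gamma_{T_{j_0}}]=[\gamma_{u_T}]$. Restarting the graphical flow from $u_T$ by Step 1's short-time argument extends graphicality with $\|u\|_{H^2}<\rho$ beyond $T_{j_0}$, which contradicts maximality. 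Hence $T_{j_0}=\infty$.

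\emph{Main obstacle.} I expect the delicate point to be the gluing and uniqueness of the graph function in Step 3. The graph function is not unique when $\gamma_*$ is multiply covered, and Lemma~\ref{lem:uniform} is stated for any graph representation. So one must check that the representative obtained by continuation is smooth in $t$ and coincides with the PDE solution. I would handle this by always producing $u$ as the solution of \eqref{eq:graph-csf} and invoking geometric uniqueness of CSF, rather than selecting a representative at each time.
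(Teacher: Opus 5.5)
Your proposal is correct and follows essentially the same argument as the paper: a first-exit-time continuity argument in which Lemma~\ref{lem:pathlength} controls $\|u(\cdot,t)-u_{j_0}\|_{L^2}$, Lemma~\ref{lem:uniform} supplies uniform higher Sobolev bounds, interpolation turns these into $H^2$ smallness, and the graphical solution is then extended past the exit time, with the graph function fixed as the PDE solution from $u_{j_0}$ and matched to $\Gamma$ by uniqueness. The differences are cosmetic: you take $m=3$ and estimate directly instead of arguing by contradiction with the threshold $\eta$, and you get convergence as $t\uparrow T$ from the finite $L^2$ path length plus interpolation rather than from bounds on $\partial_t u$ via the PDE.
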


\begin{proof}
Since $[\Gamma_{t_j}]\to [\gamma_*]$ in $\Omega$, it follows that $[\Gamma_{t_j}]=[\gamma_{u_j}]$ in $\Omega$ for some $u_j\in C^\infty(S^1)$ with $u_j\to0$ in $C^\infty(S^1)$. Fix $m>2$ and let $B_m$ be the positive constant given by Lemma~\ref{lem:uniform}. 
Let $C_0$ be the constant in Lemma~\ref{lem:pathlength}. The interpolation inequality gives a constant $C_m$ such that
\begin{equation}
\label{eq:SobolevInterpolation}
\|w\|_{H^2}
    \leq
    C_m\|w\|_{L^2}^{1-2/m}\|w\|_{H^m}^{2/m},
\end{equation}
for any $w \in H^m(S^1)$. Set
\[
    \eta
    :=
    \left(
        \frac{\rho}
        {2C_m(2B_m)^{2/m}}
    \right)^{m/(m-2)}
    >0.
\]
Note that $e(t_j)\to0$. Choose $j_0$ sufficiently large and set $a:=t_{j_0}$ so that
\[
    a\geq 1,
    \qquad
    \|u_{j_0}\|_{H^2}<\frac{\rho}{4},
    \qquad
    C_0e(a)^\beta<\eta.
\]
Let $\mathcal{T}$ denote the set of all $T\geq a$ such that $\Gamma$ is a graphical solution of CSF on $[a,T]$ with a normal graph function $v_T(\cdot,t)$ such that $v_T(\cdot,a)=u_{j_0}$ and for all $t\in [a,T]$,  
\[
  \|v_T(\cdot,t)\|_{H^2}<\rho.
\]
Let $T_*:=\sup \mathcal{T}$. By uniqueness for CSF starting with smooth initial data \cite[Theorem~3.1]{Angenent1990} and short-time existence and uniqueness for the uniformly parabolic equation \eqref{eq:graph-csf}, it follows that $T_*>a$. Moreover, there exists a smooth function \(u\colon S^1\times[a,T_*)\to \mathbb R\) such that $u(\cdot,a)=u_{j_0},$ $\|u(\cdot,t)\|_{H^2}<\rho,$ and $[\Gamma_t]=[\gamma_{u(\cdot,t)}]$ for all $t\in [a,T_*)$. 

Suppose, for contradiction, that \(T_*<\infty\).
\begin{claim}
\label{claim:trapping}
For every $T_1\in(a,T_*)$,
\[
    \|u(\cdot, T_1)\|_{H^2}<\frac{3\rho}{4}.
\]
\end{claim}
\begin{proof}
Assume, by contradiction, that there exists $T_1\in(a,T_*)$ such that
\(\|u(\cdot, T_1)\|_{H^2}\geq\frac{3\rho}{4}.\)
Set $w:=u(\cdot, T_1)-u(\cdot, a).$
Since \(\|u(\cdot, a)\|_{H^2}<\frac{\rho}{4},\) the triangle inequality gives
\begin{equation} \label{eq:lower-bound-w}
\begin{aligned}
    \|w\|_{H^2}
    &\geq
    \|u(\cdot, T_1)\|_{H^2}
    -
    \|u(\cdot, a)\|_{H^2} >
    \frac{3\rho}{4}-\frac{\rho}{4}
    =
    \frac{\rho}{2}.
\end{aligned}   
\end{equation}
By the definition of $T_*$, \(\|u(\cdot,t)\|_{H^2}<\rho<\delta\) for all
$t\in[a,T_1]$.
Lemma~\ref{lem:uniform} yields  $\|w\|_{H^m}\leq 2B_m.$ 
Combining \eqref{eq:SobolevInterpolation} with \eqref{eq:lower-bound-w}, we have
\[
    \frac{\rho}{2}
    <
    C_m
    \|w\|_{L^2}^{1-2/m}
    (2B_m)^{2/m}.
\]
By the choice of $\eta$, it follows that $\|w\|_{L^2}>\eta.$ We apply Lemma~\ref{lem:pathlength} and obtain
\[
\begin{aligned}
    \|w\|_{L^2}=\|u(\cdot, T_1)-u(\cdot, a)\|_{L^2} \leq \int_a^{T_1}\|\pp_t u( \cdot, t)\|_{L^2}\,dt \leq  C_0e(a)^\beta<\eta,
\end{aligned}
\]
a contradiction. Hence $\|u(\cdot, T_1)\|_{H^2}<\frac{3\rho}{4}.$ 
\end{proof}
Now we use Claim~\ref{claim:trapping} to show that the graphical solution $u(\cdot, t)$ can be smoothly extended over $T_*$, arriving at the contradiction we want. Combining Claim~\ref{claim:trapping} and Lemma~\ref{lem:uniform} shows that for all $l\geq 3$, 
\[
    \sup_{a\leq t<T_*}\|u(\cdot,t)\|_{H^l}
    \leq B_l.
\]
Since $Q^{-2}$ and $A$ stays bounded above and away from zero by \eqref{eq: bound on Q} and \eqref{eq: bound on A}, and the coefficients of \eqref{eq:graph-csf}  are smooth, this gives a constant $C_l>0$ for each $l\geq 3$ such that 
$$\sup_{a\leq t<T_*}\|\pp_t u\|_{H^{l-2}}\leq C_l.$$
By the Fundamental theorem of Calculus, for every $l\geq 3$, $u(t)\to u_*$ in $H^{l-2}(S^1)$ for some $u_*\in H^{l-2}(S^1)$ as $t\uparrow T_*$. 
Hence, $u_*\in C^\infty(S^1)$ and $u(t)\to u_*$ in $C^\infty(S^1)$ as $t\uparrow T_*$. 
By Claim~\ref{claim:trapping}, 
\[
  \|u_*\|_{H^2}\leq\frac{3\rho}{4}<\rho.
\]
Using short-time existence and uniqueness for uniform parabolic PDE again, we can extend the graphical solution $\Gamma_t$ beyond $T_*$ preserving the $H^2$ bound. This is a contradiction to \(T_*=\sup\mathcal T\). Therefore \(T_*=\infty\), proving the proposition.
\end{proof}

\begin{proof}[Proof of Theorem~\ref{thm:main}]
Let $\rho \in (0,\delta)$. By Proposition~\ref{prop:trapping}, there exists $j_0 \in \mathbb{N}$ sufficiently large such that $\Gamma$ is a graphical solution of CSF on $[t_{j_0},\infty)$  and its normal graph function $u$ satisfies
\[
    \|u(\cdot,t)\|_{H^2}<\rho
    \qquad\text{for all }t\geq t_{j_0}.
\]
Fix any integer $m>2$. Lemma~\ref{lem:uniform} gives constants $B_m<\infty$ such that $\|u(\cdot,t)\|_{H^m}\leq B_m$ for $t\geq t_{j_0}.$ By the Sobolev embedding and interpolation inequalities, we have,
\[
\begin{aligned}
\|u(\cdot,t)\|_{C^m}
&\leq C_m\|u(\cdot,t)\|_{H^{m+1}}\\
&\leq
C_m
\|u(\cdot,t)\|_{L^2}^{2/(m+3)}
\|u(\cdot,t)\|_{H^{m+3}}^{(m+1)/(m+3)}\\
&\leq
C_m B_{m+3}^{(m+1)/(m+3)}
\rho^{2 /(m+3)}.
\end{aligned}
\]
for all $t \geq t_{j_0}$. Since $[\Gamma_t] = [\gamma_{u(\cdot, t)}]$ for all $t \geq t_{j_0}$, we have $[\Gamma_t]$ converges to $[\gamma_*]$ in $\Omega$, as $t\to \infty$. If the initial curve $\Gamma_0$ is embedded, then by Lemma~\ref{lem:global}, $\gamma_*$ is an embedded closed geodesic of multiplicity one. This completes the proof. 
\end{proof}

\appendix

\section{Appendix}
\label{sec:appendix}

\setcounter{theorem}{0}
\renewcommand{\thetheorem}{\thesection.\arabic{theorem}}

The following technical lemma is used in Lemma \ref{lem:uniform}. Recall $\mathcal{U}_\delta$ defined in \eqref{eq:U-delta}. 
\begin{lemma}
\label{lem:graph-reparam-compactness}
Let $u_i\in{\mathcal U}_\delta$ and $ h_i \in\Diff^+(S^1)$. Suppose that $u_i\to u_\infty$ in $C^0(S^1)$ and that, for a smooth immersion $\bar{\gamma}\colon S^1\to\Surf$,  $[\gamma_{u_i}] \to [\bar{\gamma}]$ in $\Omega$. 
Then $u_\infty\in\mathcal U_\delta\cap C^\infty(S^1)$ and after passing to a subsequence,
$u_i\to u_\infty$ in $C^\infty(S^1)$. 
\end{lemma}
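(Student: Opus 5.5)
Take $\varphi_i\in\Diff^+(S^1)$ with $\gamma_{u_i}\circ\varphi_i\to\bar\gamma$ in $C^\infty(S^1,\Surf)$; the $h_i$ in the statement play no role, so we name the reparametrizations $\varphi_i$. The plan has four steps. First extract a limit $\varphi_\infty$ of the $\varphi_i$. Then show that $\bar\gamma=\gamma_{u_\infty}\circ\varphi_\infty$ with $\varphi_\infty$ a diffeomorphism. From this, $u_\infty$ is smooth. Finally, bootstrap the convergence of $u_i$ to $C^\infty$ by lifting through the local diffeomorphism $F$.

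\emph{Step 1: equicontinuity of $\varphi_i$.} Write $\gamma_{u_i}\circ\varphi_i=F(\varphi_i,u_i\circ\varphi_i)$. Since $\|u_i\|_{C^0}<\varepsilon/2$, the curves $\gamma_{u_i}\circ\varphi_i$ take values in $F(S^1\times[-\varepsilon/2,\varepsilon/2])$. Work with the lifted maps $\Phi_i:=(\varphi_i,u_i\circ\varphi_i)\colon S^1\to S^1\times(-\varepsilon,\varepsilon)$, so that $F\circ\Phi_i=\gamma_{u_i}\circ\varphi_i$. Because $F$ is a local diffeomorphism, uniformly so on the compact set $S^1\times[-\varepsilon/2,\varepsilon/2]$, the $C^1$ bound on $F\circ\Phi_i$ gives a $C^1$ bound on $\Phi_i$. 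This uses the pullback metric \eqref{eq:Fermi-metric} and \eqref{eq: bound on A}:
\[
|\partial_\theta\Phi_i|_{F^*g}=|\partial_\theta(\gamma_{u_i}\circ\varphi_i)|_g .
\]
Consequently $\varphi_i$ is uniformly Lipschitz. By Arzel\`a--Ascoli, along a subsequence $\varphi_i\to\varphi_\infty$ uniformly (working with lifts to $\R$ of degree one), and $\varphi_\infty$ is a nondecreasing degree-one map. Together with $u_i\to u_\infty$ in $C^0$, this gives $F(\varphi_\infty,u_\infty\circ\varphi_\infty)=\bar\gamma$.

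\emph{Step 2: $\varphi_\infty$ is a diffeomorphism and $u_\infty$ is smooth. This is the main obstacle.} I would argue with lengths. The quantity $|\partial_\theta\Phi_i|_{F^*g}$ is bounded below by $\tfrac12|\varphi_i'|$ and converges uniformly to $|\bar\gamma'|>0$. The problem is that a lower bound on $|\varphi_i'|$ does not follow directly from this, since the graph direction could absorb the speed. However, the tangent of $\gamma_u$ satisfies
\[
\partial_s\gamma_u=\partial_sF+\partial_su\,\partial_rF,\qquad |\partial_s u|<\varepsilon/2,
\]
so the $\partial_s F$ component dominates. The speed is therefore comparable to $|\varphi_i'|$:
\[
|\partial_\theta\Phi_i|\le 2|\varphi_i'| .
\]
Hence $\varphi_i'\ge c>0$ uniformly. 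Since the $\varphi_i$ are also uniformly Lipschitz, $\varphi_\infty$ is bi-Lipschitz.

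Next, on small arcs $F$ restricts to an analytic diffeomorphism, so locally $\Phi_\infty=F^{-1}\circ\bar\gamma$ is smooth, and $\Phi_i\to\Phi_\infty$ locally in $C^\infty$. Patching the local arcs, which is possible because the lifts $\Phi_i$ are continuous and converge in $C^0$, gives convergence in $C^\infty(S^1)$. Reading off the first component, $\varphi_i\to\varphi_\infty$ in $C^\infty$. The uniform lower bound passes to the limit, so $\varphi_\infty'\ge c$ and $\varphi_\infty\in\Diff^+(S^1)$. Reading off the second component, $u_i\circ\varphi_i\to u_\infty\circ\varphi_\infty$ in $C^\infty$.

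\emph{Step 3: conclusion.} Composing with inverses, $\varphi_i^{-1}\to\varphi_\infty^{-1}$ in $C^\infty$, since inversion is continuous in $\Diff^+$ given the uniform lower bound on derivatives. Therefore
\[
u_i=(u_i\circ\varphi_i)\circ\varphi_i^{-1}\to u_\infty
\]
in $C^\infty(S^1)$, and $u_\infty$ is smooth. Finally, $u_\infty\in\mathcal U_\delta$ because $\mathcal U_\delta$ is closed and the $u_i$ converge in $H^2$.
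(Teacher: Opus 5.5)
Your proposal is correct and follows essentially the same route as the paper. The steps match: two-sided bounds on $\varphi_i'$ from $Q[u_i]\in[\tfrac12,2]$, Arzel\`a--Ascoli, local inversion of $F$ to upgrade the lifts $\Phi_i=(\varphi_i,u_i\circ\varphi_i)$ to $C^\infty$ convergence, and then $u_i=(u_i\circ\varphi_i)\circ\varphi_i^{-1}$. The only cosmetic difference is that you get $u_\infty\in\mathcal U_\delta$ at the end from strong $H^2$ convergence, while the paper gets it at the start from weak compactness.
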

\begin{proof}
The Sobolev embedding $H^2(S^1)\Subset C^1(S^1)$ shows that \(u_\infty\in\mathcal U_\delta\), $u_i\rightharpoonup u_\infty$ in $H^2(S^1)$, and $u_i\to u_\infty$ in $C^1(S^1)$. By definition of the convergence in $\Omega$, 
there exists $h_i\in \operatorname{Diff}^+(S^1)$ such that $\eta_i:=\gamma_{u_i}\circ h_i\to \bar{\gamma}$ in $C^\infty(S^1,\Surf)$. 
In particular, $|\partial_{\theta}\eta_i| \to |\partial_{\theta}\bar{\gamma}| > 0$. 
By \eqref{eq: bound on Q}, 
\begin{equation}
\label{eqn:h-i-C1-bound-3}
|\partial_\theta\gamma_{u_i}|_g
=
\frac{\ell}{2\pi}Q[u_i]
\in
\left[\frac{\ell}{4\pi},\frac{\ell}{\pi}\right].
\end{equation}
Since \(    h_i'
    =\frac{|\partial_\theta\eta_i|_g}
           {|\partial_\theta\gamma_{u_i}|_g\circ h_i}\), there exist constants $c,C>0$ such that
\begin{equation}
\label{eq:graph-reparam-h-bounds}
c\leq h_i'\leq C
\end{equation}
for all sufficiently large $i$. 
After passing to a subsequence, the Arzel\`a-Ascoli theorem gives \(h_i\to h_\infty\) in \(C^0(S^1)\) for a continuous map $h_\infty:S^1\to S^1$. 
Define \(z_i\colon S^1
\to 
S^1\times(-\varepsilon/2,\varepsilon/2)\) by \(z_i(\theta)
:=
\bigl(h_i(\theta),u_i(h_i(\theta))\bigr).\) Then
\begin{equation}
\label{eq:F-z-i}
F\circ z_i
=
\gamma_{u_i}\circ h_i
=
\eta_i.
\end{equation}
Since $u_i\to u_\infty$ and $h_i\to h_\infty$ in $C^0(S^1)$ uniformly, we obtain $u_i\circ h_i
\to
u_\infty\circ h_\infty$ in $C^0(S^1)$. Consequently,
\begin{equation}
\label{eq:z-i-C0-convergence}
z_i
\to
z_\infty
:=
\bigl(h_\infty,u_\infty\circ h_\infty\bigr)
\qquad\text{in }C^0(S^1).
\end{equation}
Passing to the limit in \eqref{eq:F-z-i} gives $F\circ z_\infty=\bar\gamma.$ 

Fix $\theta_0\in S^1$. Since $F$ is a local diffeomorphism at $z_\infty(\theta_0)$, there exist open neighborhoods
\(z_\infty(\theta_0)\in W_{\theta_0}
\subset
S^1\times(-\varepsilon,\varepsilon)\)
and
\(F(z_\infty(\theta_0))\in V_{\theta_0}\subset\Surf\)
such that
\(F|_{W_{\theta_0}}
\colon
W_{\theta_0}
\to 
V_{\theta_0}\)
is a diffeomorphism. There exists an interval
$I_{\theta_0}\subset S^1$ containing $\theta_0$ such that
\(z_\infty(\overline{I_{\theta_0}})
\subset W_{\theta_0}.\) By \eqref{eq:z-i-C0-convergence},
\(z_i(I_{\theta_0})\subset W_{\theta_0}\) for every sufficiently large $i$. Hence, by \eqref{eq:F-z-i},
\(z_i
=
(F|_{W_{\theta_0}})^{-1}\circ\eta_i\) on 
\(I_{\theta_0}.\) 
Since $\eta_i\to\bar\gamma$ in $C^\infty$, it follows that
\[
z_i
\to 
(F|_{W_{\theta_0}})^{-1}\circ\bar\gamma\equiv z_\infty
\qquad\text{in }C^\infty(I_{\theta_0}).
\]
A finite collection of these intervals covers $S^1$, and therefore $z_i\to z_\infty
$ in $C^\infty(S^1).$ Taking components, 
\begin{equation}
\label{eq:h-and-u-compose-convergence}
h_i\to h_\infty
\quad\text{and}\quad
u_i\circ h_i
\to
u_\infty\circ h_\infty
\qquad\text{in }C^\infty(S^1).
\end{equation}
By \eqref{eq:h-and-u-compose-convergence} and \eqref{eq:graph-reparam-h-bounds}, we obtain that $h_\infty$ is smooth and an orientation-preserving local diffeomorphism.
Since $\operatorname{deg}h_i=1$ for all $i \in \mathbb{N}$, by smooth convergence, $\operatorname{deg}h_\infty=1$. It follows that $h_\infty$ is injective and $h_\infty\in \operatorname{Diff}^+(S^1)$. Hence $h_i^{-1}
    \to 
    h_\infty^{-1}$ in  $C^\infty(S^1,S^1).$ Using  \eqref{eq:h-and-u-compose-convergence} again, we conclude that $u_\infty$ is smooth. Finally, 
$$u_i=(u_i\circ h_i)\circ h_i^{-1}\to (u_\infty\circ h_\infty)\circ h_\infty^{-1}= u_\infty\qquad \text{in }C^\infty(S^1)$$
This completes the proof. 
\end{proof}


\begin{thebibliography}{99}

\bibitem{Angenent1990}
S.~B. Angenent,
\newblock Parabolic equations for curves on surfaces. Part~I. Curves
with $p$-integrable curvature,
\newblock \emph{Ann. of Math. (2)} \textbf{132} (1990), no.~3, 451--483.
\newblock \href{https://doi.org/10.2307/1971426}{doi:10.2307/1971426}.

\bibitem{Angenent1991}
S.~B. Angenent,
\newblock Parabolic equations for curves on surfaces. Part~II.
Intersections, blow-up and generalized solutions,
\newblock \emph{Ann. of Math. (2)} \textbf{133} (1991), no.~1, 171--215.
\newblock \href{https://doi.org/10.2307/2944327}{doi:10.2307/2944327}.

\bibitem{Angenent1999}
S.~Angenent,
\newblock Inflection points, extatic points and curve shortening,
\newblock in \emph{Hamiltonian Systems with Three or More Degrees of
Freedom}, NATO ASI Series~C, vol.~533, Kluwer Academic Publishers,
Dordrecht, 1999, pp.~3--10.
\newblock
\href{https://doi.org/10.1007/978-94-011-4673-9_1}
{doi:10.1007/978-94-011-4673-9\_1}.

\bibitem{Angenent2005}
S.~B. Angenent,
\newblock Curve shortening and the topology of closed geodesics on
surfaces,
\newblock \emph{Ann. of Math. (2)} \textbf{162} (2005), no.~3,
1187--1241.
\newblock
\href{https://doi.org/10.4007/annals.2005.162.1187}
{doi:10.4007/annals.2005.162.1187}.

\bibitem{AryanLee2026}
S.~Aryan and T.-K. Lee,
\newblock Non-uniqueness of geodesic limits and a question of Grayson
and Gage,
\newblock preprint, arXiv:2608.04855v1 (2026).
\newblock \url{https://arxiv.org/abs/2608.04855}.

\bibitem{CerveraMascaroMichor1991}
V.~Cervera, F.~Mascar\'o, and P.~W. Michor,
\newblock The action of the diffeomorphism group on the space of
immersions,
\newblock \emph{Differential Geom. Appl.} \textbf{1} (1991), no.~4,
391--401.
\newblock
\href{https://doi.org/10.1016/0926-2245(91)90015-2}
{doi:10.1016/0926-2245(91)90015-2}.

\bibitem{FeehanMaridakis2020}
P.~M.~N. Feehan and M.~Maridakis,
\newblock \L ojasiewicz--Simon gradient inequalities for analytic and
Morse--Bott functions on Banach spaces,
\newblock \emph{J. Reine Angew. Math.} \textbf{765} (2020), 35--67.
\newblock
\href{https://doi.org/10.1515/crelle-2019-0029}
{doi:10.1515/crelle-2019-0029}.

\bibitem{Gage1990}
M.~E. Gage,
\newblock Curve shortening on surfaces,
\newblock \emph{Ann. Sci. \'Ecole Norm. Sup. (4)} \textbf{23} (1990),
no.~2, 229--256.
\newblock
\href{https://doi.org/10.24033/asens.1603}
{doi:10.24033/asens.1603}.

\bibitem{GageHamilton1986}
M.~E. Gage and R.~S. Hamilton,
\newblock The heat equation shrinking convex plane curves,
\newblock \emph{J. Differential Geom.} \textbf{23} (1986), no.~1,
69--96.
\newblock
\href{https://doi.org/10.4310/jdg/1214439902}
{doi:10.4310/jdg/1214439902}.

\bibitem{Grayson1987}
M.~A. Grayson,
\newblock The heat equation shrinks embedded plane curves to round
points,
\newblock \emph{J. Differential Geom.} \textbf{26} (1987), no.~2,
285--314.
\newblock
\href{https://doi.org/10.4310/jdg/1214441371}
{doi:10.4310/jdg/1214441371}.

\bibitem{Grayson1989}
M.~A. Grayson,
\newblock Shortening embedded curves,
\newblock \emph{Ann. of Math. (2)} \textbf{129} (1989), no.~1, 71--111.
\newblock
\href{https://doi.org/10.2307/1971486}
{doi:10.2307/1971486}.

\bibitem{PludaPozzetta2024}
A.~Pluda and M.~Pozzetta,
\newblock \L ojasiewicz--Simon inequalities for minimal networks:
stability and convergence,
\newblock \emph{Math. Ann.} \textbf{389} (2024), no.~3, 2729--2782.
\newblock
\href{https://doi.org/10.1007/s00208-023-02714-7}
{doi:10.1007/s00208-023-02714-7}.

\bibitem{Pozzetta2022}
M.~Pozzetta,
\newblock Convergence of elastic flows of curves into manifolds,
\newblock \emph{Nonlinear Anal.} \textbf{214} (2022), Paper
No.~112581.
\newblock
\href{https://doi.org/10.1016/j.na.2021.112581}
{doi:10.1016/j.na.2021.112581}.

\bibitem{Simon1983}
L.~Simon,
\newblock Asymptotics for a class of nonlinear evolution equations,
with applications to geometric problems,
\newblock \emph{Ann. of Math. (2)} \textbf{118} (1983), no.~3, 525--571.
\newblock
\href{https://doi.org/10.2307/2006981}
{doi:10.2307/2006981}.

\bibitem{sun2024curve}
Q.~Sun,
\newblock Curve shortening flow of space curves with convex projections,
\newblock preprint, arXiv:2410.08399 (2024).
\newblock \url{https://arxiv.org/abs/2410.08399}.

\bibitem{sun2025huisken}
Q.~Sun,
\newblock Huisken's distance comparison principle in higher codimension,
\newblock preprint, arXiv:2509.16823 (2025).
\newblock \url{https://arxiv.org/abs/2509.16823}.

\bibitem{sun2025singularities}
Q.~Sun,
\newblock Singularities of curve shortening flow with convex projections,
\newblock preprint, arXiv:2510.14863 (2025).
\newblock \url{https://arxiv.org/abs/2510.14863}.
\bibitem{Edelen2015}
N.~Edelen,
\newblock Noncollapsing of curve-shortening flow in surfaces,
\newblock \emph{Int. Math. Res. Not. IMRN} 2015, no.~20,
10143--10153.
\newblock
\href{https://doi.org/10.1093/imrn/rnu271}
{doi:10.1093/imrn/rnu271}.

\bibitem{Oaks1994}
J.~A. Oaks,
\newblock Singularities and self-intersections of curves evolving on
surfaces,
\newblock \emph{Indiana Univ. Math. J.} \textbf{43} (1994), no.~3,
959--981.
\newblock
\href{https://doi.org/10.1512/iumj.1994.43.43042}
{doi:10.1512/iumj.1994.43.43042}.

\bibitem{Zhu1998}
X.-P. Zhu,
\newblock Asymptotic behavior of anisotropic curve flows,
\newblock \emph{J. Differential Geom.} \textbf{48} (1998), no.~2,\
225--274.
\newblock
\href{https://doi.org/10.4310/jdg/1214460796}{doi:10.4310/jdg/1214460796}.

\bibitem{JohnsonMuraleetharan2010}
D. L. Johnson and M. Muraleetharan,
\newblock Singularity formation of embedded curves evolving on surfaces
by curvature flow,
\newblock \emph{Int. J. Pure Appl. Math.} \textbf{61} (2010), no.~2,\
121--146.
\newblock
\href{https://www.lehigh.edu/\~dlj0/papers/paper-short-hamilton.pdf}
{Author PDF}.

\bibitem{Evans1998}
L.~C. Evans,
\newblock \emph{Partial Differential Equations},
\newblock Graduate Studies in Mathematics, vol.~19,
\newblock American Mathematical Society, Providence, RI, 1998.
\newblock \href{https://doi.org/10.1090/gsm/019}{doi:10.1090/gsm/019}.


\bibitem{schulze2014uniqueness}
F.~Schulze,
\newblock Uniqueness of compact tangent flows in mean curvature flow,
\newblock \emph{J. Reine Angew. Math.} \textbf{690} (2014), 163--172.
\newblock
\href{https://doi.org/10.1515/crelle-2012-0070}
{doi:10.1515/crelle-2012-0070}.

\bibitem{colding2015uniqueness}
T.~H. Colding and W.~P. Minicozzi II,
\newblock Uniqueness of blowups and \L ojasiewicz inequalities,
\newblock \emph{Ann. of Math. (2)} \textbf{182} (2015), no.~1,
221--285.
\newblock
\href{https://doi.org/10.4007/annals.2015.182.1.5}
{doi:10.4007/annals.2015.182.1.5}.

\bibitem{chodosh2021uniqueness}
O.~Chodosh and F.~Schulze,
\newblock Uniqueness of asymptotically conical tangent flows,
\newblock \emph{Duke Math. J.} \textbf{170} (2021), no.~16,
3601--3657.
\newblock
\href{https://doi.org/10.1215/00127094-2020-0098}
{doi:10.1215/00127094-2020-0098}.

\bibitem{zhu2020ojasiewicz}
J.~J. Zhu,
\newblock \L ojasiewicz inequalities, uniqueness and rigidity for
cylindrical self-shrinkers,
\newblock \emph{Camb. J. Math.} \textbf{13} (2025), no.~1, 173--224.
\newblock
\href{https://doi.org/10.4310/CJM.250319033143}
{doi:10.4310/CJM.250319033143}.

\bibitem{lee2024uniqueness}
T.-K. Lee and X.~Zhao,
\newblock Uniqueness of conical singularities for mean curvature flows,
\newblock \emph{J. Funct. Anal.} \textbf{286} (2024), no.~1, Paper
No.~110200, 23~pp.
\newblock
\href{https://doi.org/10.1016/j.jfa.2023.110200}
{doi:10.1016/j.jfa.2023.110200}.

\bibitem{bryan2026sharp}
P.~Bryan, M.~Langford, and J.~J. Zhu,
\newblock Sharp distance comparison for curve shortening flow on the round
sphere,
\newblock \emph{Ann. Sc. Norm. Super. Pisa Cl. Sci. (5)} (2026),
published online, 12~pp.
\newblock
\href{https://doi.org/10.2422/2036-2145.202501_010}
{doi:10.2422/2036-2145.202501\_010}.



\end{thebibliography}
\end{document}